\documentclass[12pt]{amsart}
\usepackage{fullpage}   
\usepackage{amssymb}    
\usepackage{amsmath}
\usepackage{amsthm}

\theoremstyle{plain}
\newtheorem{theorem}{Theorem}[section]
\newtheorem{lemma}[theorem]{Lemma}
\newtheorem{definition}[theorem]{Definition}
\newtheorem{proposition}[theorem]{Proposition}
\newtheorem{corollary}[theorem]{Corollary}
\newtheorem{problem}[theorem]{Problem}
\newtheorem{example}[theorem]{Example}
\theoremstyle{definition}
\newtheorem*{acknowledgment}{Acknowledgment}
\newtheorem*{remark}{Remark}

\numberwithin{equation}{section}


\newcommand{\Inn}{\mathrm{Inn}}
\newcommand{\Mlt}{\mathrm{Mlt}}
\newcommand{\Aut}{\mathrm{Aut}}
\newcommand{\gammacat}{\mathbf{\Gamma Lp_{o}}}
\newcommand{\bruckcat}{\mathbf{BrLp_{o}}}

\title{A class of loops categorically isomorphic to Bruck loops of odd order}

\author{Mark~Greer}
\email{mark.greer@du.edu}
\address{Department of Mathematics,
2360 S Gaylord St., University of Denver,
Denver, CO 80208 USA}

\begin{document}
\allowdisplaybreaks

\begin{abstract}
We define a new variety of loops, $\Gamma$-loops.  After showing $\Gamma$-loops are power-associative, our main goal is showing a categorical isomorphism between Bruck loops of odd order and $\Gamma$-loops of odd order.  Once this has been established, we can use the well known structure of Bruck loops of odd order to derive the Odd Order, Lagrange and Cauchy Theorems for $\Gamma$-loops of odd order, as well as the nontriviality of the center of finite $\Gamma$-$p$-loops ($p$ odd).  Finally, we answer a question posed by Jedli\v{c}ka, Kinyon and Vojt\v{e}chovsk\'{y} about the existence of Hall $\pi$-subloops and Sylow $p$-subloops in commutative automorphic loops.
\end{abstract}

\subjclass[2010]{20N05}
\keywords{$\Gamma$-loops, Bruck loops, power-associativity}
\maketitle
\allowdisplaybreaks

\section{Introduction}
\label{sec:intro}
A loop $(Q,\cdot)$ consists of a set $Q$ with a binary operation $\cdot : Q\times Q\to Q$ such that $(i)$ for all $a,b\in Q$, the equations $ax = b$ and $ya = b$ have unique solutions $x,y\in Q$, and $(ii)$ there exists $1\in Q$ such that $1x = x1 = x$ for all $x\in Q$.  Standard references for loop theory are \cite{bruck, hala}.

Let $G$ be a uniquely $2$-divisible group, that is, a group in which the map $x\mapsto x^2$ is a bijection. On $G$ we define two new binary operations as follows:
\begin{align}
x\oplus y &= (xy^2 x)^{1/2}\,, \label{eqn:groupBruck} \\
x\circ y &= xy[y,x]^{1/2}\,.   \label{eqn:groupgamma}
\end{align}
Here $a^{1/2}$ denotes the unique $b\in G$ satisfying $b^2 = a$ and $[y,x] = y^{-1}x^{-1}yx$. Then it turns out that both $(G,\oplus)$ and $(G,\circ)$ are loops with neutral element $1$. Both loops are \emph{power-associative}, which informally means that integer powers of elements can be defined unambiguously. Further, powers in $G$, powers in $(G,\oplus)$ and powers in $(G,\circ)$ all coincide.

For $(G,\oplus)$ all of this is well-known with the basic ideas dating back to Bruck \cite{bruck} and Glauberman \cite{glauberman1}. $(G,\oplus)$ is an example of a \emph{Bruck loop}, that is, it satisfies the following identities
\begin{align}
\tag{Bol} (x\oplus (y\oplus x))\oplus z &= x\oplus (y\oplus (x\oplus z)) \label{eqn:bol}\\
\tag{AIP} (x\oplus y)^{-1} &= x^{-1}\oplus y^{-1}  \label{eqn:aip}
\end{align}

It is not immediately obvious that $(G,\circ)$ is a loop. It is well-known in one special case. If $G$ is nilpotent of class at most $2$, then $(G,\circ)$ is an abelian group (and in fact, coincides with $(G,\oplus)$). In this case, the passage from $G$ to $(G,\circ)$ is called the ``Baer trick'' \cite{martin}.

In the general case, $(G,\circ)$ turns out to live in a variety of loops which we will call $\Gamma$-\emph{loops}. We defer the formal definition until {\S}2, but note here that one defining axiom is commutativity. $\Gamma$-loops include as special cases two classes of loops which have appeared in the literature: commutative RIF loops \cite{KV09} and commutative automorphic loops \cite{JKV1,JKV2,JKV3,deBGV}. We will not discuss RIF loops any further in this paper but we will review the notion of commutative automorphic loop in {\S}2.

Jedli\v{c}ka, Kinyon and Vojt\v{e}chovsk\'{y} \cite{JKV1} showed that starting with a uniquely $2$-divisible commutative automorphic loop $(Q,\circ)$, one can define a Bruck loop $(Q,\oplus_{\circ})$ on the same underlying set $Q$ by
\begin{equation}
\label{eqn:gammatoBruck}
x \oplus_{\circ} y = (x^{-1}\backslash_{\circ} (y^2\circ x))^{1/2}\,.
\end{equation}
Here $a\backslash_{\circ} b$ is the unique solution $c$ to $a\circ c = b$. We will extend this result to $\Gamma$-loops (Theorem \ref{gammatobruck}). This gives us a functor from the category of uniquely $2$-divisible $\Gamma$-loops to the category of uniquely $2$-divisible Bruck loops, which restricts to a functor $\mathcal{B} : \gammacat \rightsquigarrow \bruckcat$ from the category $\gammacat$ of $\Gamma$-loops of odd order to the category $\bruckcat$ of Bruck loops of odd order.
One of our main results is the construction of an inverse functor $\mathcal{G} : \bruckcat \rightsquigarrow \gammacat$, that is, $\mathcal{G}\circ\mathcal{B}$ is the identity functor on $\gammacat$ and $\mathcal{B}\circ\mathcal{G}$ is the identity functor on $\bruckcat$.

Finite Bruck loops of odd order are known to have many remarkable properties, all found by Glauberman \cite{glauberman1,glauberman2}. For instance, they satisfy Lagrange's Theorem, the Odd Order Theorem, the Sylow and Hall Existence Theorems and finite Bruck $p$-loops ($p$ odd) are centrally nilpotent. Using the isomorphism of the categories $\gammacat$ and $\bruckcat$, we immediately get the same results for $\Gamma$-loops of odd order. We work out the details in {\S}6.

Originally, our motivation was to answer an open problem of Jedli\v{c}ka, Kinyon and \allowbreak{Vojt\v{e}chovsk\'{y}} \cite{JKV1}, dealing with the existence of Sylow and Hall subgroups in finite commutative automorphic loops.  The authors showed that a solution would follow from an answer in the odd order case \cite{JKV1}.  Using this and the new isomorphism, the Sylow and Hall Theorems for $\Gamma$-loops of odd order are answered in the affirmative, in a more general way than was originally posed.  Further, the proofs of the Odd Order Theorem and the non triviality of the center of finite $\Gamma$-$p$-loops ($p$ odd) are much simpler than the proofs in \cite{JKV1} and \cite{JKV3} for commutative automorphic loops.

We conclude this introduction with an outline of the rest of the paper. In {\S}2 we give the complete definition of $\Gamma$-loop and we prove that for a uniquely $2$-divisible group $G$, the construction \eqref{eqn:groupgamma} defines a $\Gamma$-loop on $G$. We also give examples of groups $G$ such that $(G,\circ)$ is \emph{not} automorphic.  In {\S}3, we prove that $\Gamma$-loops are power-associative (Theorem \ref{powerassociative}). As a consequence, for $G$ a uniquely $2$-divisible group, powers in $G$ coincide with powers in $(G,\circ)$ (Corollary \ref{powers}). In {\S}4 we review the notion of twisted subgroup of a group and the connection between uniquely $2$-divisible twisted subgroups and Bruck loops of odd order.  In the special case where $(G,\circ)$ is a $\Gamma$-loop constructed on a uniquely $2$-divisible group $G$, it turns out that $(G,\oplus) = (G,\oplus_{\circ})$ (Theorem \ref{samebruck}).  As a consequence, if $(G,\circ)$ is the $\Gamma$-loop of a uniquely $2$-divisible group $G$ and if $(H,\circ)$ is a subloop of $(G,\circ)$, then $H$ is a twisted subgroup of $G$ (Corollary \ref{twistgamma}).

In {\S}5 we construct the functor $\mathcal{G} : \bruckcat\rightsquigarrow\gammacat$ and show that $\mathcal{B}$ and $\mathcal{G}$ are inverses of each other (Theorem \ref{core}). A loop is both a Bruck loop and a $\Gamma$-loop if and only if it is a commutative Moufang loop (Proposition \ref{moufang}) and we observe that restricted to such loops, both $\mathcal{B}$ and $\mathcal{G}$ are identity functors (Proposition \ref{samefrommoufang}).

In {\S}6, using the categorical isomorphism between $\Gamma$-loops of odd order and Bruck loops of odd order, we derive the Odd Order, Sylow and Hall Theorems (Theorems \ref{oddorder}, \ref{sylow}, and \ref{hall}) for $\Gamma$-loops of odd order, as well as the nontriviality of the center of finite $\Gamma$-$p$-loops ($p$ odd).  Finally in {\S}7, we conclude with some open problems.

\section{$\Gamma$-loops}
\label{sec:gamma}
To avoid excessive parentheses, we use the following convention:
\begin{itemize}
\item multiplication $\cdot$ will be less binding than divisions $\backslash, /$.
\item divisions are less binding than juxtaposition
\end{itemize}
For example $xy/z \cdot y\backslash xy$ reads as $((xy)/z)(y\backslash (xy))$.  To avoid confusion when both $\cdot$ and  $\circ$ are in a calculation, we denote divisions by $\backslash_{\cdot}$ and $\backslash_{\circ}$ respectively.

In a loop \emph{Q}, the left and right translations by $x \in Q$ are defined by $yL_{x} = xy$ and $yR_{x} = yx$ respectively.  We thus have $\backslash,/$ as $x\backslash y=yL_{x}^{-1}$ and $y/x=yR_{x}^{-1}$.  We define the \emph{left multiplication group} of $Q$, $\Mlt_{\lambda}(Q)= \left\langle L_{x}\mid x\in Q\right\rangle$ and \emph{multiplication group} of $Q$, $\Mlt(Q)=\left\langle R_{x},L_{x}\mid x\in Q\right\rangle$.  We define the \emph{inner mapping group} of $Q$, $\Inn(Q)=\Mlt(Q)_{1}= \{\theta\in \Mlt(Q) \mid 1\theta=1\}$.  A loop \emph{Q} is an \emph{automorphic loop} if every inner mapping of $Q$ is an automorphism of $Q$, $\Inn(Q) \leq \Aut(Q)$.

In a loop $Q$, we set $x^n = 1 L_x^n$ for all $x\in Q$ and for all $n\in \mathbb{Z}$.  A loop $Q$ is \emph{power-associative} if every $1$-generated subloop is a group.
This is easily seen to be equivalent to $x^m x^n = x^{m+n}$ for every $x\in Q$ and for all $m,n\in \mathbb{Z}$. As noted in the introduction, we informally think of
power-associativity as saying that powers of elements are unambiguously defined.  Bruck loops are power-associative \cite{glauberman1}, and we will show in the next
section that $\Gamma$-loops, defined below, are also power-associative. In the meantime, a special case of power-associativity is the identity $x^{-1} x = 1$,
that is, every element has two-sided inverses. For $\Gamma$-loops, this is immediate from the first part of their definition, which is commutativity.

For finite loops, we can characterize unique $2$-divisibility in different ways.

\begin{theorem}[\cite{JKV1}]
A finite commutative loop $Q$ is uniquely $2$-divisible \emph{if and only if} it has odd order.  Similarly, a finite power-associative loop $Q$ is uniquely $2$-divisible \emph{if and only if} each element of $Q$ has odd order.
\end{theorem}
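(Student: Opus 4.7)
The plan is to reduce unique $2$-divisibility to the bijectivity of the squaring map $S: Q\to Q$, $S(x)=x^2=x\cdot x$, and then attack the two parts separately by (1) a parity/counting argument on the multiplication table for the commutative case, and (2) the cyclic subgroup generated by $x$ in the power-associative case. Since $Q$ is finite, $S$ is bijective iff $S$ is injective iff $S$ is surjective, so it suffices to verify surjectivity (resp.\ non-surjectivity) in each direction.

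For the first assertion, fix $a\in Q$ and count pairs $(x,y)\in Q\times Q$ with $xy=a$. Since $Q$ is a loop, each row of the multiplication table is a permutation of $Q$, so the total count is exactly $|Q|$. Commutativity makes the table symmetric: off-diagonal solutions $(x,y)$ with $x\ne y$ pair up with $(y,x)$, contributing an even number, while the diagonal contribution is precisely $|S^{-1}(a)|$. Hence
\begin{equation*}
|S^{-1}(a)| \equiv |Q|\pmod 2 \qquad \text{for every } a\in Q.
\end{equation*}
If $|Q|$ is odd, then $|S^{-1}(a)|\ge 1$ for every $a$, so $S$ is surjective and therefore bijective. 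Conversely if $|Q|$ is even, then $|S^{-1}(a)|$ is always even and so cannot equal $1$, so $S$ is not a bijection.

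For the second assertion, suppose $Q$ is power-associative and uniquely $2$-divisible; given $x\in Q$ of order $n$, the subloop $\langle x\rangle$ is a cyclic group. If $n=2k$ were even then $(x^k)^2=x^{2k}=1=1^2$ with $x^k\ne 1$, contradicting injectivity of $S$. Conversely, suppose every $a\in Q$ has odd order $n=2k+1$; then in $\langle a\rangle$ we have $(a^{k+1})^2=a^{2k+2}=a\cdot a^n=a$, so $S$ is surjective, and again finiteness upgrades this to bijectivity.

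There is no real obstacle — the only subtlety is noticing that the counting argument for the commutative case does not need power-associativity (so element orders need not even be defined), while the second statement genuinely requires power-associativity to talk about element orders and to extract the square root $a^{k+1}$ of $a$ inside $\langle a\rangle$.
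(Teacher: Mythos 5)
Your proof is correct: the parity/counting argument (pairing $(x,y)$ with $(y,x)$ and reading off the diagonal) establishes the commutative case, and the cyclic-subgroup argument handles the power-associative case, with finiteness correctly used to pass from surjectivity or injectivity of the squaring map to bijectivity. The paper itself gives no proof here — it simply cites the result from Jedli\v{c}ka, Kinyon and Vojt\v{e}chovsk\'{y} — and your argument is essentially the standard one used there, so there is nothing substantive to compare beyond noting that your write-up is self-contained.
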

We now define a new variety of loops, $\Gamma$-loops, which we focus on in this paper.
\begin{definition}
A loop $(Q,\cdot)$ is a $\Gamma$-loop if the following hold
\begin{itemize}
\item[($\Gamma_{1}$)]\quad $Q$ is commutative.
\item[($\Gamma_{2}$)]\quad $Q$ has the automorphic inverse property (AIP):\quad $\forall x,y\in Q$, $(xy)^{-1}=x^{-1}y^{-1}$.
\item[($\Gamma_{3}$)]\quad $\forall x\in Q$, $L_{x}L_{x^{-1}}=L_{x^{-1}}L_{x}$.
\item[($\Gamma_{4}$)]\quad $\forall x,y\in Q$, $P_{x}P_{y}P_{x}=P_{yP_{x}}$ where $P_{x}=R_{x}L_{x^{-1}}^{-1}=L_{x}L_{x^{-1}}^{-1}$.
\end{itemize}
\end{definition}
Note that a loop satisfying the AIP necessarily satisfies $(x\backslash y)^{-1} = x^{-1}\backslash y^{-1}$ and $(x/y)^{-1}=x^{-1}/y^{-1}$. We will use this without comment in what follows.

Our conventions for conjugation and commutators in groups are
\[
x^y = y^{-1}xy \qquad\text{and}\qquad [x,y] = x^{-1}y^{-1}xy = x^{-1} x^y = (y^{-1})^x y\,.
\]
The following identities are easily verified and will be used without reference.

\begin{lemma}
Let $G$ be a group.  Then for all $x,y\in G$,
\begin{itemize}
\item [(i)]\quad $[x,y]^{-1}=[y,x]$
\item [(ii)]\quad $[x,y^{-1}]=[y,x]^{y^{-1}}$ and $[x^{-1},y]=[y,x]^{x^{-1}}$,
\item [(iii)]\quad $[xy,x^{-1}]=[x,yx^{-1}]$,
\item [(iv)]\quad $[x^{-1},y^{-1}]=[x,y]^{(xy)^{-1}}$,
\end{itemize}
Moreover if $G$ is uniquely $2$-divisible,
\begin{itemize}
\item [(v)]\quad $(x^{1/2})^{-1}=(x^{-1})^{1/2}$,
\item [(vi)]\quad $(x^{y})^{1/2}=(x^{1/2})^{y}$.
\end{itemize}
\end{lemma}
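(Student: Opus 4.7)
These are all standard identities whose proofs are pure symbol-pushing, so the plan is to verify each item by direct expansion from the definitions $x^y=y^{-1}xy$ and $[x,y]=x^{-1}y^{-1}xy$, together with the uniqueness of square roots in the last two cases. There is no real obstacle to worry about; the goal is simply to organize the verifications in a way that minimizes writing.

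For items (i)--(iv), I would expand both sides and read off that the resulting words in $x,y$ coincide. For (i), $(x^{-1}y^{-1}xy)^{-1}=y^{-1}x^{-1}yx=[y,x]$. For (ii), the left side of the first identity is $x^{-1}yxy^{-1}$, and the right side is
\[
y\cdot(y^{-1}x^{-1}yx)\cdot y^{-1}=x^{-1}yxy^{-1};
\]
the second identity in (ii) is symmetric with the roles of $x$ and $y$ exchanged and conjugation by $x^{-1}$ instead of $y^{-1}$. For (iii), both $[xy,x^{-1}]$ and $[x,yx^{-1}]$ simplify to $y^{-1}xyx^{-1}$ after a single line of cancellation. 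For (iv), one checks directly that
\[
[x,y]^{(xy)^{-1}}=xy\cdot x^{-1}y^{-1}xy\cdot y^{-1}x^{-1}=xyx^{-1}y^{-1}=[x^{-1},y^{-1}].
\]

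For (v) and (vi), I would invoke uniqueness of square roots in the uniquely $2$-divisible group $G$. Setting $a=x^{1/2}$, we have $a^2=x$, so $(a^{-1})^2=(a^2)^{-1}=x^{-1}$, and therefore $a^{-1}$ must be the unique square root of $x^{-1}$, giving (v). For (vi), conjugation is a group homomorphism, so $(a^y)^2=(a^2)^y=x^y$, and again by uniqueness $(x^y)^{1/2}=a^y=(x^{1/2})^y$.

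Because the whole statement is a collection of one-line verifications, I would actually present the proof quite compactly, perhaps even as a single displayed chain per item, and rely on the reader to fill in any cancellation they wish to see. The only reason to record these identities at all is that they will be used repeatedly and silently in later sections, so clarity of layout matters more than depth of argument.
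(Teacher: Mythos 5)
Your verifications are correct, and this matches the paper's treatment: the paper states these identities without proof as ``easily verified,'' precisely because each follows from direct expansion of $x^y=y^{-1}xy$ and $[x,y]=x^{-1}y^{-1}xy$ (for (i)--(iv)) and from uniqueness of square roots plus the fact that conjugation is an automorphism (for (v)--(vi)), exactly as you argue. The only cosmetic point is that the second identity in (ii) is not literally the first with $x$ and $y$ exchanged --- one also inverts both sides using (i), or simply expands directly --- but this is a one-line check and not a gap.
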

\begin{lemma}
Let $G$ be a uniquely $2$-divisible group.  Then
\begin{itemize}
\item[(i)] $x\circ y=y\circ x$,
\item[(ii)] $(x\circ y)^{-1}=x^{-1}\circ y^{-1}$,
\item[(iii)]$xyx= \{x(y\circ x)x(y\circ x)^{-1}\}^{1/2}(y\circ x)$.
\end{itemize} 
\label{twisttrick}
\end{lemma}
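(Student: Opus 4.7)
For part (i), the plan is to reduce both $x \circ y$ and $y \circ x$ to the common expression $yx[y,x]^{-1/2}$. Starting from the basic group identity $xy = yx \cdot [y,x]^{-1}$ (which is a direct rearrangement of $[y,x] = y^{-1}x^{-1}yx$), I get $x \circ y = xy[y,x]^{1/2} = yx[y,x]^{-1/2}$. For the other side, combining $[x,y] = [y,x]^{-1}$ with item (v) of the preceding lemma yields $y \circ x = yx[x,y]^{1/2} = yx[y,x]^{-1/2}$. Commutativity then follows.

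For part (ii), I expand $(x \circ y)^{-1} = [y,x]^{-1/2} y^{-1} x^{-1}$ directly, and then using (i), rewrite $x^{-1} \circ y^{-1} = y^{-1} \circ x^{-1} = y^{-1}x^{-1}[x^{-1},y^{-1}]^{1/2}$. The target equality becomes $[x^{-1},y^{-1}]^{1/2} = xy \cdot [y,x]^{-1/2} \cdot y^{-1} x^{-1}$, i.e., an assertion that the square root of $[x^{-1},y^{-1}]$ equals a prescribed conjugate of $[y,x]^{-1/2}$. This drops out directly from parts (iv), (v), and (vi) of the preceding lemma.

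For part (iii), abbreviate $z = y \circ x$. The claim reads $xyx \cdot z^{-1} = \{xzxz^{-1}\}^{1/2}$, and since $G$ is uniquely $2$-divisible, it suffices to verify the squared version $(xyxz^{-1})^2 = xzxz^{-1}$. A few routine cancellations reduce this to the single identity $yxz^{-1}xy = z$. Plugging in $z^{-1} = [y,x]^{1/2} x^{-1} y^{-1}$ and using the fact $x^{-1}y^{-1}xy = [x,y] = [y,x]^{-1}$, the left-hand side telescopes to $yx[y,x]^{-1/2}$, which is precisely $z$ by the calculation carried out in part (i).

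The main obstacle throughout is purely clerical: one must track the orientations of commutators (that is, $[x,y]$ versus $[y,x]$) and the placement of their square roots with care. No conceptual leap is required beyond systematic application of the identities collected in the preceding lemma.
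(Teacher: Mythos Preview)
Your proof is correct and follows essentially the same approach as the paper. Parts (i) and (ii) are direct rearrangements of the paper's computation; for part (iii), the paper first isolates the auxiliary identity $yx(y\circ x)^{-1} = (y\circ x)y^{-1}x^{-1}$ and substitutes it into the square, whereas you reduce to the equivalent identity $yxz^{-1}xy = z$ and verify it directly---the two arguments differ only in how the cancellations are arranged.
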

\begin{proof}
For (i), we have
\[
x\circ y = xy[y,x]^{1/2} = yx[x,y][y,x]^{1/2}=yx[x,y]([x,y]^{-1})^{1/2}  = yx[x,y]^{1/2} = y\circ x\,.
\]
Similarly for (ii),
\begin{alignat*}{2}
x^{-1}\circ y^{-1} 
&= x^{-1}y^{-1}[y^{-1},x^{-1}]^{1/2} 
&&=(yx)^{-1}([y,x]^{(yx)^{-1}})^{1/2} \\
&=(yx)^{-1}([y,x]^{1/2})^{(yx)^{-1}}
&&=(yx)^{-1} (yx) [y,x]^{1/2} (yx)^{-1}\\
&=[y,x]^{1/2}(yx)^{-1}
&&= ([x,y]^{1/2})^{-1} (yx)^{-1}\\
&=(yx[x,y]^{1/2})^{-1} 
&&=(y\circ x)^{-1}\\
&=(x\circ y)^{-1}.
\end{alignat*}
For (iii), using (i) and (ii) from above,
\begin{alignat*}{2}
yx(y\circ x)^{-1}&=yx(x^{-1}\circ y^{-1}) &&=yxx^{-1}y^{-1}[y^{-1},x^{-1}]^{1/2} \\
&=[y^{-1},x^{-1}]^{1/2} &&=(xyy^{-1}x^{-1}yxy^{-1}x^{-1})^{1/2} \\
&=(xy[y,x](xy)^{-1})^{1/2}
&&=xy[y,x]^{1/2}(xy)^{-1} \\
&=(x\circ y)(xy)^{-1}
&&=(y\circ x)y^{-1}x^{-1} 
\end{alignat*}
Hence we have
\begin{alignat*}{2}
\{xyx(y\circ x)^{-1}\}^2 &= x\underbrace{yx(y\circ x)^{-1}}xyx(y\circ x)^{-1}
&&= x(y\circ x)y^{-1}x^{-1}xyx(y\circ x)^{-1} \\
&= x(y\circ x)x(y\circ x)^{-1}\,. &&
\end{alignat*}
Thus $xyx = \{x(y\circ x)x(y\circ x)^{-1}\}^{1/2}(y\circ x)$, as claimed.
\end{proof}

\begin{theorem}
Let $G$ be a uniquely $2$-divisible group.  Then $(G,\circ)$ is a $\Gamma$-loop.
\label{newloop}
\end{theorem}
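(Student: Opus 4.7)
The plan is to verify the four axioms of a $\Gamma$-loop in turn, relying on Lemma~\ref{twisttrick} and the unique $2$-divisibility of $G$. Axioms $(\Gamma_1)$ and $(\Gamma_2)$ are precisely parts (i) and (ii) of Lemma~\ref{twisttrick}, so they require no further work.

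To prove $(G,\circ)$ is a loop with identity $1$, I would derive an explicit formula for the left division. If $x \circ y = z$, i.e.\ $xy[y,x]^{1/2} = z$, then the candidate $[y,x]^{1/2} = y^{-1}x^{-1}z$ on squaring and substituting $[y,x] = y^{-1}x^{-1}yx$ reduces the equation to $y(xz^{-1}x)y = z$. Setting $b = xz^{-1}x$, this is $yby = z$, equivalently $(yb)^2 = zb$, which has the unique solution $y = (zb)^{1/2}b^{-1}$ since $G$ is uniquely $2$-divisible. Conversely, this $y$ does satisfy $x\circ y = z$: from $yby = z$ one recovers $[y,x] = (y^{-1}x^{-1}z)^2$, so $[y,x]^{1/2} = y^{-1}x^{-1}z$ by uniqueness of square roots, whence $xy[y,x]^{1/2} = z$.

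For $(\Gamma_4)$, the key step is to show $P_x(y) = xyx$ (ordinary group product). Using the division formula above twice, one gets $x^{-1}\backslash_\circ y = (yx^{-1}y^{-1}x^{-1})^{1/2}\,xyx$ and $x\backslash_\circ(xyx) = (xyxy^{-1})^{1/2}\,y$; the identity $P_x(y) = xyx$ is $x^{-1}\backslash_\circ y = x\backslash_\circ(xyx)$, which rearranges to $(A^{-1})^{1/2}\cdot xyx = A^{1/2}\cdot y$ for $A = xyxy^{-1}$, and this is immediate from $xyx = Ay$. Once this is known, $(\Gamma_4)$ is just group associativity: $P_xP_yP_x(z) = xyxzxyx = P_{xyx}(z) = P_{yP_x}(z)$.

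Axiom $(\Gamma_3)$ is the main obstacle, requiring a careful commutator calculation. Writing $c = [y,x]^{1/2}$, $d = [y,x^{-1}]^{1/2}$, $u = x\circ y$, $v = x^{-1}\circ y$, a useful preliminary is $x^{-1}dx = c^{-1}$, which follows by squaring and applying $x^{-1}[y,x^{-1}]x = [x,y] = c^{-2}$ together with uniqueness of square roots. One then computes $x^{-1}u = yc$, $xv = yd$, and, using $y^{-1}x^{-1}y = [y,x]x^{-1}$ and $y^{-1}xy = x[x,y]$, obtains $[u,x^{-1}] = c^{-1}d$ and $[v,x] = d^{-1}c$. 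These are mutual inverses, so setting $e = [v,x]^{1/2}$ gives $[u,x^{-1}]^{1/2} = e^{-1}$, and $(\Gamma_3)$ reduces to $yce^{-1} = yde$, i.e.\ $de^2 = c$, which is exactly $e^2 = d^{-1}c$ by definition. The delicate point throughout is moving square roots through conjugation, handled by the identity $(gag^{-1})^{1/2} = g\,a^{1/2}\,g^{-1}$ from Lemma 2.1(vi).
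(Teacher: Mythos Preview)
Your argument is largely sound, but there is one real slip in the $(\Gamma_4)$ step that you should repair.

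The identity you actually establish, $x^{-1}\backslash_\circ y = x\backslash_\circ(xyx)$, is (after applying $x\circ(-)$) the statement $x\circ(x^{-1}\backslash_\circ y)=xyx$; in the paper's right-action convention this says $yL_{x^{-1}}^{-1}L_x = xyx$. But by definition $P_x = L_xL_{x^{-1}}^{-1}$, so $yP_x = x^{-1}\backslash_\circ(x\circ y)$, and what $(\Gamma_4)$ needs is $x^{-1}\backslash_\circ(x\circ y)=xyx$. These two descriptions of $y\mapsto xyx$ agree only once $L_x$ and $L_{x^{-1}}$ commute, i.e.\ once $(\Gamma_3)$ is known. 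Since your $(\Gamma_3)$ calculation is correct, the simplest fix is to reorder: prove $(\Gamma_3)$ first, then your $(\Gamma_4)$ argument goes through verbatim. The paper sidesteps this by invoking Lemma~\ref{twisttrick}(iii), which gives directly $xyx=\{x(y\circ x)x(y\circ x)^{-1}\}^{1/2}(y\circ x)=x^{-1}\backslash_\circ(y\circ x)=yP_x$ with the operators in the right order from the outset.

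On $(\Gamma_3)$ the approaches genuinely differ. Your head-on commutator computation (tracking $c,d,e$ and using $x^{-1}dx=c^{-1}$) is correct. The paper instead proves two short auxiliary identities, $x^{-1}\circ(xy)=x\circ(yx^{-1})$ (from $[xy,x^{-1}]=[x,yx^{-1}]$) and $x^{-1}\circ y = y[y,x]^{1/2}x^{-1}$, and then chains them:
\[
x^{-1}\circ(x\circ y)=x^{-1}\circ\bigl(xy[y,x]^{1/2}\bigr)=x\circ\bigl(y[y,x]^{1/2}x^{-1}\bigr)=x\circ(x^{-1}\circ y).
\]
This avoids most of the square-root bookkeeping. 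Your loop/division derivation is equivalent to the paper's formula $a\backslash_\circ b=\{a^{-1}ba^{-1}b^{-1}\}^{1/2}b$ after a one-line rearrangement.
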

\begin{proof}
To see $(Q,\circ)$ is a loop, fix $a,b\in Q$ and let $x=\{a^{-1}ba^{-1}b^{-1}\}^{1/2}b$.  Thus, we compute
\begin{align*}
x&=\{a^{-1}ba^{-1}b^{-1}\}^{1/2}b &\Leftrightarrow\\
(xb^{-1})^2&=a^{-1}ba^{-1}b^{-1} &\Leftrightarrow\\
xb^{-1}x&=a^{-1}ba^{-1} &\Leftrightarrow\\
xa&=bx^{-1}a^{-1}b &\Leftrightarrow\\
[x,a]&=(x^{-1}a^{-1}b)^{2} &\Leftrightarrow\\
ax[x,a]^{1/2}&=b &\Leftrightarrow\\
a\circ x&=b.
\end{align*}
Note that this gives the following expression for $\backslash_{\circ}$:
\[
a\backslash_{\circ} b = \{a^{-1}ba^{-1}b^{-1}\}^{1/2} b\,.
\]
\noindent 
It is easy to see that inverses coincide in $G$ and $(G,\circ)$.  Therefore, $(\Gamma_{1})$ and $(\Gamma_{2})$ are exactly Lemma \ref{twisttrick}(i) and (ii).  For $(\Gamma_{3})$, first note
\begin{equation}
x^{-1}\circ (xy)=y[xy,x^{-1}]^{1/2}=y[x,yx^{-1}]^{1/2}=(yx^{-1})\circ x=x\circ(yx^{-1}) .
\label{A}
\end{equation}
Similarly,
\begin{equation}
x^{-1}\circ y=x^{-1}y[y,x^{-1}]^{1/2}=x^{-1}y([x,y]^{1/2})^{x^{-1}}=y[y,x][x,y]^{1/2}x^{-1}=y[y,x]^{1/2}x^{-1}.
\label{B}
\end{equation}
Therefore
\begin{equation*}
x^{-1}\circ (x\circ y)=x^{-1}\circ(xy[y,x]^{1/2})\stackrel{\eqref{A}}{=}x\circ ((y[y,x]^{1/2})x^{-1})\stackrel{\eqref{B}}{=}x\circ(x^{-1}\circ y).
\end{equation*}
For $(\Gamma_{4})$, rewriting Lemma \ref{twisttrick}(iii) gives 
\[xyx=\{x(y\circ x)x(y\circ x)^{-1}\}^{1/2}(y\circ x)=x^{-1}\backslash_{\circ}(y\circ x)=yP_{x}. \]
Let $y\Psi_{x}=xyx$, that is, $y\Psi{x}=yP_{x}$.  Hence, $P_{x}P_{y}P_{x}=\Psi_{x}\Psi_{y}\Psi_{x}=\Psi_{y\Psi_{x}}=P_{yP_{x}}$.
\end{proof}
\begin{lemma}
Commutative automorphic loops are $\Gamma$-loops.
\end{lemma}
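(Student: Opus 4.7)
The plan is to verify each of the four defining $\Gamma$-loop axioms for a commutative automorphic loop $Q$. Axiom $(\Gamma_1)$ is commutativity, which is given, so all the work lies in $(\Gamma_2)$--$(\Gamma_4)$.

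For $(\Gamma_2)$, I would invoke the result of \cite{JKV1} that in a commutative automorphic loop the inversion map $J : x \mapsto x^{-1}$ is an automorphism; since $Q$ is commutative, $J \in \Aut(Q)$ is equivalent to AIP (just apply $J$ to a product $xy$). For $(\Gamma_3)$, note that in any commutative loop each element has a two-sided inverse, so both $L_x L_{x^{-1}}$ and $L_{x^{-1}} L_x$ fix $1$ and hence lie in $\Inn(Q)$; by the automorphic hypothesis they are automorphisms of $Q$. To obtain $L_x L_{x^{-1}} = L_{x^{-1}} L_x$, I would compare their actions on a generic element using commutativity and AIP, or appeal to the same identity already recorded in \cite{JKV1} for commutative automorphic loops.

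The main obstacle is $(\Gamma_4)$: the identity $P_x P_y P_x = P_{y P_x}$, where $P_x = L_x L_{x^{-1}}^{-1}$. This is the genuinely substantive assertion and cannot be reduced to a one-line manipulation. The approach is to invoke the detailed analysis of the $P$-maps in commutative automorphic loops carried out in \cite{JKV1}, where precisely this ``conjugation by $P_x$'' identity appears as one of the central technical tools. Assembling $(\Gamma_1)$--$(\Gamma_4)$ then completes the proof.
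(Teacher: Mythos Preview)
Your proposal is correct and takes essentially the same approach as the paper: both verify $(\Gamma_1)$--$(\Gamma_4)$ by invoking the relevant results on commutative automorphic loops from \cite{JKV1}. The paper's proof is simply more terse, citing Lemmas~2.6, 2.7 and 3.3 of \cite{JKV1} directly rather than unpacking which axiom each handles; your narrative is a faithful expansion of that citation (and your caution that $(\Gamma_4)$ is the substantive part, requiring the $P$-map analysis of \cite{JKV1}, is exactly right).
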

\begin{proof}
This follows from Lemmas $2.6$, $2.7$ and $3.3$ in \cite{JKV1}.
 \end{proof}
\begin{example}
The smallest known $\Gamma$-loop constructed from a group of odd order has order $375$, and its underlying group is the smallest group of odd order that is not metabelian, with GAP library number $[375; 2]$.  Later we will show an example of a subloop of order $75$ which is also not automorphic, and that subloop is the smallest known nonautomorphic $\Gamma$-loop of odd order.
\label{e1}
\end{example}
\begin{example}
The following is the smallest $\Gamma$-loop which is neither a commutative automorphic nor commutative RIF loop, found by \textsc{Mace4} \cite{PM}.
\begin{table}[h]
\centering
\begin{tabular}{r|rrrrrr}
$\cdot$ & 0 & 1 & 2 & 3 & 4 & 5\\
\hline
    0 & 0 & 1 & 2 & 3 & 4 & 5 \\
    1 & 1 & 0 & 3 & 5 & 2 & 4 \\
    2 & 2 & 3 & 0 & 4 & 5 & 1 \\
    3 & 3 & 5 & 4 & 0 & 1 & 2 \\
    4 & 4 & 2 & 5 & 1 & 0 & 3 \\
    5 & 5 & 4 & 1 & 2 & 3 & 0
\end{tabular}
\end{table}
\label{smallest}
\end{example}

\section{$\Gamma$-Loops are power-associative}
\label{sec:PA}
Recall our definition $x^{n}=1L_{x}^{n}$ for all $n\in \mathbb{Z}$.
\begin{proposition}
\label{prp:invworks}
Let $Q$ be a $\Gamma$-loop.  Then $x^{-n}=(x^{-1})^n=(x^{n})^{-1}$.
\end{proposition}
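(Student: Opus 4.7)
The plan is to leverage the inversion map $J\colon y\mapsto y^{-1}$ together with the commutation relation $L_x L_{x^{-1}} = L_{x^{-1}} L_x$ from $(\Gamma_3)$. The stated identity splits into two independent equalities, $(x^{-1})^n = (x^n)^{-1}$ and $x^{-n} = (x^{-1})^n$, and I will address them separately.

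First I establish $(x^{-1})^n = (x^n)^{-1}$. By $(\Gamma_2)$, $J(xy)=(xy)^{-1}=x^{-1}y^{-1}=J(x)J(y)$, so $J$ is a homomorphism of $(Q,\cdot)$; commutativity $(\Gamma_1)$ forces left and right inverses to coincide, hence $J^2=\mathrm{id}$, so $J\in\Aut(Q)$. Conjugating the left translation by $J$ gives $JL_xJ=L_{J(x)}=L_{x^{-1}}$, as one checks directly: $JL_xJ(y)=(xy^{-1})^{-1}=x^{-1}y=L_{x^{-1}}(y)$. Iterating yields $L_{x^{-1}}^n=JL_x^n J$ for every $n\in\mathbb{Z}$. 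Evaluating at $1$ and using $1J=1$ then gives $(x^{-1})^n=1L_{x^{-1}}^n=J(1L_x^n)=(x^n)^{-1}$.

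For the remaining equality $x^{-n}=(x^{-1})^n$, rewrite it as $1L_{x^{-1}}^n L_x^n=1$. By $(\Gamma_3)$, the translations $L_x$ and $L_{x^{-1}}$ commute, so their integer powers commute and $L_{x^{-1}}^n L_x^n=(L_{x^{-1}}L_x)^n$. A single application suffices to see that $1$ is fixed: $1L_{x^{-1}}L_x=x^{-1}L_x=x\cdot x^{-1}=1$, straight from the defining property $x^{-1}=1L_x^{-1}$. Hence $1$ is fixed by every integer power of $L_{x^{-1}}L_x$, which gives $x^{-n}=(x^{-1})^n$.

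Chaining the two equalities yields $x^{-n}=(x^{-1})^n=(x^n)^{-1}$. The argument is essentially formal and short; the only mild subtlety is recognizing that AIP combined with commutativity promotes $J$ from a potential anti-automorphism to a genuine involutive automorphism, which is exactly what licenses the conjugation identity $JL_xJ=L_{x^{-1}}$ that drives the first half. Note that axiom $(\Gamma_4)$ plays no role in this proposition; only $(\Gamma_1)$, $(\Gamma_2)$, and $(\Gamma_3)$ are needed.
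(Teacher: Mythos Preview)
Your proof is correct and follows essentially the same approach as the paper: the equality $x^{-n}=(x^{-1})^n$ is obtained exactly as the paper does, by rewriting it as $1=(1)L_{x^{-1}}^nL_x^n$ and using $(\Gamma_3)$ to collapse this to $(1)(L_{x^{-1}}L_x)^n=1$, while your treatment of $(x^{-1})^n=(x^n)^{-1}$ via the inversion automorphism $J$ is simply a more explicit unpacking of what the paper records as ``follows from $(\Gamma_2)$.''
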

\begin{proof}
The first equality, $(1)L_x^{-n} = (1) L_{x^{-1}}^n$, is equivalent to $ 1= (1) L_{x^{-1}}^n L_x^n$.  By $(\Gamma_{3})$, $L_{x^{-1}}^n L_x^n = ( L_{x^{-1}} L_x )^n$.  But since $L_{x^{-1}} L_x \in \Inn(Q)$, we are done.  The second equality follows from $(\Gamma_{2})$.
 \end{proof}
\begin{proposition}
Let $Q$ be a $\Gamma$-loop.  Then
\begin{equation}\tag{$P_{1}$}
P_{x}=L_{x}L_{x^{-1}}^{-1}=L_{x^{-1}}^{-1}L_{x}
\label{p1}
\end{equation}
\begin{equation}\tag{$P_{2}$}
P_{x}L_{x}=L_{x}P_{x}
\label{p2}
\end{equation}
\end{proposition}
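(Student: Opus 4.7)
The plan is to derive both ($P_1$) and ($P_2$) as essentially immediate consequences of axioms ($\Gamma_1$) and ($\Gamma_3$), plus the elementary fact that if two invertible maps commute, then one commutes with the other's inverse.

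First I would recall that in any commutative loop $R_x = L_x$, which already justifies the second equality $P_x = R_x L_{x^{-1}}^{-1} = L_x L_{x^{-1}}^{-1}$ in the definition of $P_x$. This uses only ($\Gamma_1$).

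For ($P_1$), the key observation is that ($\Gamma_3$) says $L_x$ and $L_{x^{-1}}$ commute as elements of the symmetric group on $Q$. Since these are both bijections (left translations in a loop), a commuting pair of bijections satisfies $AB = BA \Longrightarrow A B^{-1} = B^{-1} A$: conjugating the relation $L_x L_{x^{-1}} = L_{x^{-1}} L_x$ by $L_{x^{-1}}^{-1}$ on both sides yields $L_{x^{-1}}^{-1} L_x = L_x L_{x^{-1}}^{-1}$. The left-hand side is one expression and the right-hand side is $P_x$ by definition, which gives the identity ($P_1$).

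For ($P_2$), I would simply chain these: starting from $P_x L_x = L_x L_{x^{-1}}^{-1} L_x$, apply ($P_1$) (equivalently, the commutation just established) to the middle two factors to rewrite $L_{x^{-1}}^{-1} L_x$ as $L_x L_{x^{-1}}^{-1}$, obtaining $L_x L_x L_{x^{-1}}^{-1} = L_x P_x$.

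There is no real obstacle here; the entire content is the trivial fact that commuting invertible maps commute with each other's inverses, applied to the axiom ($\Gamma_3$). The only thing to be careful about is using the commutative form $P_x = L_x L_{x^{-1}}^{-1}$ (which requires ($\Gamma_1$)) rather than the original $P_x = R_x L_{x^{-1}}^{-1}$, since otherwise ($\Gamma_3$) would not be directly applicable.
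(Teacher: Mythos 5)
Your proposal is correct and is essentially the paper's argument: the paper simply states that both identities follow from $(\Gamma_{3})$, and your write-up fills in exactly the intended details, namely that commuting bijections commute with each other's inverses (giving $(P_1)$) and then $(P_2)$ by rewriting the middle factors of $P_xL_x = L_xL_{x^{-1}}^{-1}L_x$. The remark that $R_x = L_x$ comes from $(\Gamma_1)$ is a fine observation, though the paper already builds that equality into the definition of $P_x$ in $(\Gamma_4)$.
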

\begin{proof}
These follow from $(\Gamma_{3})$.
 \end{proof}
\begin{lemma}
Let $Q$ be a  $\Gamma$-loop.  Then $\forall k,n\in \mathbb{Z}$ we have the following:
\begin{itemize}
\item[(a)]\quad $x^nP_{x}=x^{n+2}$
\item[(b)]\quad $P_{x}^{n}=P_{x^{n}}$
\item[(c)]\quad $x^{k}P_{x^{n}}=x^{k+2n}$
\end{itemize}
\label{paidentity}
\end{lemma}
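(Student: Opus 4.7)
My strategy is to establish (a) first as the technical base, then leverage it via $(\Gamma_{4})$ to get (b), and finally combine both for (c). The key observation that unlocks (a) is the identity $1\cdot P_{x}=x^{2}$. To see it, apply $(\Gamma_{3})$, i.e.\ $L_{x}L_{x^{-1}}=L_{x^{-1}}L_{x}$, to the element $x$: the left-hand side evaluates to $x^{-1}(xx)$ while the right-hand side equals $x(x^{-1}x)=x$, using two-sided inverses from commutativity. Hence $xx$ solves $x^{-1}z=x$, so $x^{-1}\backslash x = x^{2}$, which is exactly $1\cdot L_{x}L_{x^{-1}}^{-1}=1\cdot P_{x}$. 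For (a) in full generality, \eqref{p2} yields $P_{x}L_{x}^{n}=L_{x}^{n}P_{x}$ for all $n\in\mathbb{Z}$ (extend the given $n=1$ case by iteration and conjugation by $L_{x}^{-1}$), and therefore
\[
x^{n}P_{x} \;=\; 1\cdot L_{x}^{n}P_{x} \;=\; 1\cdot P_{x}L_{x}^{n} \;=\; x^{2}\cdot L_{x}^{n} \;=\; 1\cdot L_{x}^{n+2} \;=\; x^{n+2}.
\]

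For (b), I plug $y=x^{n}$ into $(\Gamma_{4})$ and apply (a) to the resulting $x^{n}P_{x}$, obtaining the recurrence $P_{x}P_{x^{n}}P_{x}=P_{x^{n+2}}$. The base cases $n=0$ (where $P_{1}=L_{1}L_{1}^{-1}=\mathrm{id}=P_{x}^{0}$) and $n=1$ (trivial) lift via this recurrence to every nonnegative exponent. For negative exponents, $(\Gamma_{4})$ by itself only advances upward, so I separately bootstrap $P_{x^{-1}}=P_{x}^{-1}$: specialize $(\Gamma_{4})$ to $y=x^{-1}$ and use (a) at $n=-1$ to compute $x^{-1}P_{x}=x$, forcing $P_{x}P_{x^{-1}}P_{x}=P_{x}$ and hence $P_{x^{-1}}=P_{x}^{-1}$. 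The inverted form of the recurrence, $P_{x^{n-2}}=P_{x}^{-1}P_{x^{n}}P_{x}^{-1}$, read downward from the anchors at $n=0$ and $n=-1$, then completes (b) on all of $\mathbb{Z}$.

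Statement (c) falls out quickly: by (b), $x^{k}P_{x^{n}}=x^{k}P_{x}^{n}$, and iterating (a) shifts the exponent by $+2$ at each application of $P_{x}$, so $n$ applications yield $x^{k+2n}$; for $n<0$ the same iteration using $P_{x}^{-1}$ shifts by $-2$ per step, reaching $x^{k+2n}$ again. I expect the main obstacle to be the asymmetry built into $(\Gamma_{4})$, which advances exponents in only one direction: one has to prove (a) first (in particular at $n=-1$), then use it to bootstrap $P_{x^{-1}}=P_{x}^{-1}$, and only then may the downward recurrence be deployed to finish (b). Keeping this dependency chain $(\Gamma_{3})\rightarrow\text{(a)}\rightarrow P_{x^{-1}}=P_{x}^{-1}\rightarrow\text{downward (b)}\rightarrow\text{(c)}$ in the correct order is really the only substantive point; everything else reduces to mechanical iteration.
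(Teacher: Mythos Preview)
Your proof is correct and follows essentially the same approach as the paper: (a) via $1P_{x}=x^{2}$ and \eqref{p2}, (b) via the $(\Gamma_{4})$-recurrence anchored at $n=0,1$, and (c) by iterating (a) after applying (b). The only minor difference is that the paper obtains $P_{x^{-1}}=P_{x}^{-1}$ directly from the definition, $P_{x^{-1}}=L_{x^{-1}}L_{x}^{-1}=(L_{x}L_{x^{-1}}^{-1})^{-1}=P_{x}^{-1}$, rather than via $(\Gamma_{4})$, and then disposes of all negative $n$ at once by writing $P_{x}^{n}=(P_{x}^{-n})^{-1}=P_{x^{-n}}^{-1}=P_{(x^{-n})^{-1}}=P_{x^{n}}$ instead of running the recurrence downward.
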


\begin{proof}
Note that $1P_{x}=x^{2}$ by $(\Gamma_{3})$.  For all $n$, we have
\begin{equation*}
x^{n}P_{x}=1L_{x}^{n}P_{x}\stackrel{\eqref{p2}}{=}1P_{x}L_{x}^{n}=x^{2}L_{x}^{n}=1L_{x}^{2}L_{x}^{n}=1L_{x}^{n+2}=x^{n+2}\,.
\end{equation*}

For (b), the cases $n=0,1$ are trivially true.  For $n>1$  ,
\begin{equation*}
P_{x}^{n}=P_{x}P_{x}^{n-2}P_{x}=P_{x}P_{x^{n-2}}P_{x}\stackrel{(\Gamma_{4})}{=}P_{x^{n-2}P_{x}}\stackrel{(a)}{=}P_{x^{n}}\,.
\end{equation*}
If $n=-1$ then $P_{x^{-1}}=L_{x^{-1}}L_{x}^{-1}=(L_{x}L_{x^{-1}}^{-1})^{-1}=P_{x}^{-1}$.  Thus we have for any $n<0$,
\begin{equation*}
P_x^n = (P_x^{-n})^{-1} = P_{x^{-n}}^{-1} = P_{(x^{-n})^{-1}} = P_{x^n}\,,
\end{equation*}
by Proposition \ref{prp:invworks}.

 For (c), let $k$ be fixed.  Then
\begin{equation*}
x^k P_{x^n}\stackrel{(b)} {=}  x^k P_x^n\stackrel{(a)}{=} x^{k+2} P_x^{n-1}\stackrel{(a)}{=} \ldots\stackrel{(a)} {=} x^{k+2n}\,.\qedhere
\end{equation*}
 \end{proof}

\noindent
For $m\in \mathbb{N}_0 = \mathbb{N}\cup \{0\}$, we define PA($m$) to be the statement:
\begin{equation*}
\forall i \in \{-m,...,m\} \text{ and } \forall j \in \{-m-1,...,m+1\},\quad
x^i x^j = x^{i+j}\,.
\end{equation*}

\begin{lemma}
Let $Q$ be a $\Gamma$-loop.  Then PA($m$) holds for all $m\in\mathbb{N}_0$.
\label{patrick}
\end{lemma}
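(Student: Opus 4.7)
My plan is induction on $m$; the base $m=0$ is immediate since $x^0 = 1$. For the inductive step I assume PA($m$) and derive PA($m+1$). The workhorse is Lemma \ref{paidentity}(c), $x^k P_{x^n} = x^{k+2n}$, which unpacks via the two expressions for $P_{x^n}$ in \eqref{p1} into two complementary identities:
\begin{equation*}
x^n x^k = x^{-n} \cdot x^{k+2n} \qquad\text{and}\qquad x^n \cdot (x^{-n} \backslash x^k) = x^{k+2n}.
\end{equation*}
The first rewrites any product as a ``shifted'' product; the second computes $x^n \cdot y$ whenever $y = x^{-n} \backslash x^k$ can be identified as a power via PA($m$).

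Using commutativity ($\Gamma_1$) and AIP (via Proposition \ref{prp:invworks}) to simultaneously negate both exponents and to swap their roles, I would reduce the cases of PA($m+1$) not already covered by PA($m$) to the two ``diagonal'' cases $x^{m+1} x^{\pm(m+1)}$ and to the row $x^j \cdot x^{m+2} = x^{m+2+j}$ for $|j| \le m+1$; AIP then converts this row to the analogous one with $m+2$ replaced by $-(m+2)$. The diagonal cases are immediate: $x^{m+1} x^{-(m+1)} = 1$ by commutativity and Proposition \ref{prp:invworks}, while $x^{m+1} x^{m+1} = x^{2m+2}$ drops out of the second identity with $n = m+1, k = 0$ after noting $x^{-(m+1)} \backslash 1 = x^{m+1}$; this step needs no induction.

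For the row $x^j \cdot x^{m+2}$ I would proceed in two stages. Stage 1: For $0 \le i \le m+1$, apply the first identity with $n = -i, k = m+2$ to rewrite $x^{-i} \cdot x^{m+2} = x^i \cdot x^{m+2-2i}$; since $|m+2-2i| \le m$ whenever $1 \le i \le m+1$ (and $i=0$ is trivial), PA($m$) evaluates the right side as $x^{m+2-i}$, establishing the row for $j \in \{-(m+1), \ldots, 0\}$. Stage 2: For $1 \le i \le m+1$, apply the second identity with $n = i, k = m+2-i$: the division $x^{-i} \backslash x^{m+2-i} = x^{m+2}$ is precisely Stage 1 restated, so $x^i \cdot x^{m+2} = x^{m+2+i}$.

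I expect the main obstacle to be Stage 2, which uses both forms of the key identity in succession --- one to produce the division $x^{-i} \backslash x^{m+2-i}$, the other (fed by Stage 1) to evaluate it via PA($m$). The diagonal cases and Stage 1 each require only a single application of Lemma \ref{paidentity}(c).
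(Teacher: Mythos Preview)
Your argument is correct and follows the paper's proof closely: the same induction on $m$, the same reduction via $(\Gamma_1)$, $(\Gamma_2)$, and Proposition~\ref{prp:invworks} to the diagonal products $x^{m+1}x^{\pm(m+1)}$ and the row $x^i x^{m+2}$, and the same reliance on Lemma~\ref{paidentity}(c) throughout. The one minor variation is in your Stage~2 (positive exponent on the row): the paper commutes $P_{x^i}$ past $L_{x^i}$ via \eqref{p2} and then applies PA($m$), whereas you avoid \eqref{p2} by feeding the Stage~1 result into the second form of the identity coming from \eqref{p1}; both routes are equally short and valid.
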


\begin{proof}
We induct on $m$.  PA($0$) is obvious.  Assume PA($m$) holds for some $m\geq 0$. We establish PA($m+1$) by proving $x^i x^j = x^{i+j}$ for each of the following cases:
\begin{itemize}
\item[(1)]\quad $i \in \{-m-1,\ldots, m+1\}$,\quad $j \in \{-m,\ldots, m\}$,
\item[(2)]\quad $i \in \{-m,\ldots,m\}$,\quad $j = m+1$\quad or\quad $j=-m-1$,
\item[(3)]\quad $i = m+1$, $j = -m-1$\quad or\quad $i=-m-1$, $j = m+1$,
\item[(4)]\quad $i = m+1$, $j = m+1$\quad or\quad $i = -m-1$, $j = -m-1$,
\item[(5)]\quad $i \in \{-m-1,\ldots, m+1\}$,\quad $j=m+2$\quad or\quad $j=-m-2$.
\end{itemize}
By $(\Gamma_2)$ and Proposition \ref{prp:invworks}, $x^i x^j = x^{i+j}$ implies $x^{-i} x^{-j} = x^{-i-j}$. So in each of cases (2), (3), (4) and (5), we only need to establish one of the subcases.

Case (1) follows from PA($m$) (with the roles of $i$ and $j$ reversed) and commutativity. Case (2) also follows from PA($m$). Case (3) follows from Proposition \ref{prp:invworks}: $x^{m+1}x^{-m-1}=x^{m+1}x^{-(m+1)}=1$.

For case (4),
\[
x^{m+1} x^{m+1} = (1)L_{x^{-(m+1)}}^{-1} L_{x^{m+1}} \stackrel{\eqref{p1}}{=} (1)P_{x^{m+1}} \stackrel{(\ref{paidentity}c)}{=} x^{2m+2}\,.
\]

Finally, for case (5), first suppose $i\in \{-m-1,\ldots,-1\}$. Then $-2m-2\leq 2i\leq -2$, and so $-m\leq m+2+2i\leq m$, that is, $m+2+2i\in \{-m,\ldots,m\}$. Thus
\[
x^i x^{m+2} = (x^{m+2})P_{x^i} L_{x^{-i}} \stackrel{(\ref{paidentity}c)}{=} x^{-i}x^{m+2+2i} \stackrel{\textnormal{PA($m$)}}{=} x^{m+2+i}\,.
\]
Now suppose $i\in \{1,\ldots,m+1\}$. Then $-2m-2\leq -i\leq -2$, and so $-m\leq m+2-2i\leq m$, that is, $m+i-2i\in \{-m,\ldots,m\}$. Thus
\[
x^i x^{m+2} \stackrel{(\ref{paidentity}c)}{=} (x^{m+2-2i})P_{x^i} L_{x^i} \stackrel{\eqref{p2}}{=} (x^i x^{m+2-2i})P_{x^i}
\stackrel{\textnormal{PA($m$)}}{=} (x^{m+2-i})P_{x^i} \stackrel{(\ref{paidentity}c)}{=} x^{m+2+i}\,. \qedhere
\]
\end{proof}

\begin{theorem}
$\Gamma$-loops are power-associative.
\label{powerassociative}
\end{theorem}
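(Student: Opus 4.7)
The plan is to deduce the theorem essentially for free from Lemma \ref{patrick}. Power-associativity of a $\Gamma$-loop $Q$ amounts to the identity $x^p x^q = x^{p+q}$ holding for every $x \in Q$ and every $p, q \in \mathbb{Z}$, and the family of statements PA($m$) collectively covers all such pairs as $m$ ranges over $\mathbb{N}_0$.

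Concretely, given $x \in Q$ and $p, q \in \mathbb{Z}$, I would set $m = \max(|p|, |q|)$. Then $|p| \le m$, so $p \in \{-m, \ldots, m\}$, and $|q| \le m \le m+1$, so $q \in \{-m-1, \ldots, m+1\}$. Since Lemma \ref{patrick} asserts PA($m$) for every $m \in \mathbb{N}_0$, applying PA($m$) with $i = p$ and $j = q$ yields $x^p x^q = x^{p+q}$. Because $p$ and $q$ were arbitrary, every $1$-generated subloop of $Q$ is closed under the associative multiplication of integer powers of its generator, and hence is a group; this is precisely the definition of power-associativity given in \S\ref{sec:gamma}.

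The main obstacle has already been overcome, namely the inductive proof of Lemma \ref{patrick}. That argument leveraged Proposition \ref{prp:invworks}, the $P$-identity calculations of Lemma \ref{paidentity}, and the five-case breakdown handling the transition from PA($m$) to PA($m+1$). By contrast, the present theorem is a clean bookkeeping step: the only thing to verify is that the bounds appearing in the statement of PA($m$) can be taken arbitrarily large, so no new identities, case analysis, or commutator computations are required.
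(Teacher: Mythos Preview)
Your proposal is correct and matches the paper's own proof essentially verbatim: the paper also derives the theorem immediately from Lemma~\ref{patrick}, invoking PA($|\ell|$) after arranging $|k|\le |\ell|$, which is exactly your choice $m=\max(|p|,|q|)$. No further work is needed.
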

\begin{proof}
This follows immediately from Lemma \ref{patrick}.  Indeed, $x^k x^{\ell}=x^{k+\ell}$ with $0 \leq |k| \leq |\ell |$ follows from PA($|\ell |$).
 \end{proof}

By Theorem \ref{newloop} and Theorem \ref{powerassociative}, for a uniquely $2$-divisible group $G$ and its corresponding $\Gamma$-loop $(G,\circ)$, we have powers coinciding.
\begin{corollary}
Let $G$  be a uniquely $2$-divisible group and $(G,\circ)$ its associated $\Gamma$-loop.  Then powers in $G$ coincide with powers in $(G,\circ)$.
\label{powers}
\end{corollary}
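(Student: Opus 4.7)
The plan is to compare the two notions of power by induction on $n \in \mathbb{N}_0$ and then extend to negative exponents via inverses. Write $x^{(n)}$ for the $n$-th power computed in $(G,\circ)$ and $x^n$ for the $n$-th power computed in $G$.

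The crucial observation, which I would state and verify first, is the following: \emph{if $a,b \in G$ commute in the group $G$, then $a \circ b = ab$.} This is immediate from the defining formula $a \circ b = ab[b,a]^{1/2}$, because $[b,a] = 1$ forces $[b,a]^{1/2} = 1$. A second ingredient, recorded already in the proof of Theorem \ref{newloop}, is that inverses in $(G,\circ)$ coincide with group inverses; combined with $(\Gamma_2)$ and power-associativity of $(G,\circ)$, this gives $x^{(-n)} = (x^{(n)})^{-1}$ where the inverse on the right is computed in $G$.

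With these in hand, I would induct on $n \geq 0$. The cases $n = 0$ and $n = 1$ are trivial since $x^{(0)} = 1 = x^0$ and $x^{(1)} = x = x^1$. For the induction step, suppose $x^{(n)} = x^n$. Then $x^{(n+1)} = x \circ x^{(n)} = x \circ x^n$, and since $x$ and $x^n$ commute in the group $G$, the crucial observation gives $x \circ x^n = x \cdot x^n = x^{n+1}$. For negative exponents, apply the inverse identity: $x^{(-n)} = (x^{(n)})^{-1} = (x^n)^{-1} = x^{-n}$.

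There is no real obstacle here; the argument is essentially a one-line induction once one notices that $\circ$ restricts to the group operation on any pair of commuting elements, together with the already-established coincidence of inverses. The only small point worth flagging explicitly in the write-up is that power-associativity of $(G,\circ)$ (Theorem \ref{powerassociative}) is what makes the inductive recursion $x^{(n+1)} = x \circ x^{(n)}$ legitimate in the first place.
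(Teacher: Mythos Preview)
Your argument is correct and is essentially what the paper has in mind: the paper gives no explicit proof beyond citing Theorems \ref{newloop} and \ref{powerassociative}, and your induction makes that citation concrete via the observation that $a\circ b = ab$ whenever $[b,a]=1$.

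One small correction to your commentary: the recursion $x^{(n+1)} = x\circ x^{(n)}$ is not a consequence of power-associativity but simply the paper's \emph{definition} $x^{n} = 1L_{x}^{n}$ unwound, so Theorem \ref{powerassociative} is not needed for the positive-exponent induction. Where power-associativity (or equivalently Proposition \ref{prp:invworks}) genuinely enters is in your treatment of negative exponents, to justify $x^{(-n)} = (x^{(n)})^{-1}$.
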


\section{Twisted subgroups and uniquely $2$-divisible Bruck loops}
\label{sec:twist}
We turn to an idea from group theory, first studied by Aschbacher \cite{aschbacher}.  We follow the notations and definitions used by Foguel, Kinyon and Phillips \cite{FKP}, and refer the reader to that paper for a more complete discussion of the following results.
\begin{definition}
A \emph{twisted subgroup} of a group $G$ is a subset $T\subset G$ such that
$1\in T$ and for all $x,y\in T$, $x^{-1}\in T$ and $xyx \in T$.
\end{definition}
\begin{example}[\cite{FKP}]
Let $G$ be a group and $\tau \in \Aut(G)$ with  $\tau^{2}=1$.  Let $K(\tau)=\{g\in Q\mid g\tau=g^{-1}\}$.  Then $K(\tau)$ is a twisted subgroup.
\end{example}
\begin{proposition}
Let $G$ be uniquely $2$-divisible group and let $\tau\in \Aut(G)$ satisfy $\tau^{2}=1$.  Then $K(\tau)$ is closed under $\circ$ and $\backslash_{\circ}$ and hence is a subloop of $(G,\circ)$.
\label{twistloop}
\end{proposition}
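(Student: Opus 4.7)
The plan is to check directly that for $x,y\in K(\tau)$, both $x\circ y$ and $x\backslash_{\circ} y$ lie in $K(\tau)$; the identity $1$ lies in $K(\tau)$ trivially since $1\tau=1=1^{-1}$, and this together with the two closure facts gives the subloop property (noting that $(G,\circ)$ is commutative, so only one division needs to be considered).

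The essential observation is that $\tau$, being an automorphism of the uniquely $2$-divisible group $G$, preserves every piece of structure that appears in the formulas for $\circ$ and $\backslash_{\circ}$: products, inverses, commutators, and square roots. Preservation of square roots comes from unique $2$-divisibility: if $b=a^{1/2}$, then $(b\tau)^2=(b^2)\tau=a\tau$, so by uniqueness $(a^{1/2})\tau=(a\tau)^{1/2}$.

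For closure under $\circ$, I would simply apply $\tau$ to the defining formula $x\circ y = xy[y,x]^{1/2}$ to get
\[
(x\circ y)\tau = (x\tau)(y\tau)[y\tau,x\tau]^{1/2} = x^{-1}y^{-1}[y^{-1},x^{-1}]^{1/2} = x^{-1}\circ y^{-1},
\]
and then invoke Lemma \ref{twisttrick}(ii) to identify the right side with $(x\circ y)^{-1}$. Hence $(x\circ y)\tau = (x\circ y)^{-1}$, so $x\circ y\in K(\tau)$.

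For closure under $\backslash_{\circ}$, the main step is to use the explicit formula $a\backslash_{\circ} b = \{a^{-1}ba^{-1}b^{-1}\}^{1/2}\,b$ extracted in the proof of Theorem \ref{newloop}. Applying $\tau$ gives $(x\backslash_{\circ} y)\tau = \{xy^{-1}xy\}^{1/2}y^{-1}$. Independently, using property (v) of the lemma on square roots,
\[
(x\backslash_{\circ} y)^{-1} = y^{-1}\{yxy^{-1}x\}^{1/2}.
\]
To reconcile these, I would observe that $xy^{-1}xy = (yxy^{-1}x)^{y}$ and apply property (vi) to move the square root through the conjugation, obtaining $\{xy^{-1}xy\}^{1/2}y^{-1} = y^{-1}\{yxy^{-1}x\}^{1/2}$. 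This yields $(x\backslash_{\circ} y)\tau = (x\backslash_{\circ} y)^{-1}$, completing the verification.

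The only step that requires any care is the division case: the square-root in the explicit formula for $\backslash_{\circ}$ is not symmetric in $x$ and $y$, so matching $(x\backslash_{\circ} y)\tau$ against $(x\backslash_{\circ} y)^{-1}$ needs the conjugation identity (vi) to slide the factor of $y^{-1}$ across the square root. Once that move is made, both closures reduce to routine bookkeeping with the automorphism property of $\tau$.
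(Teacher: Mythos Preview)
Your argument is correct, and for closure under $\circ$ it is identical to the paper's. For $\backslash_{\circ}$ you take a slightly different path: you compute directly with the explicit group formula $a\backslash_{\circ}b=\{a^{-1}ba^{-1}b^{-1}\}^{1/2}b$ and use the conjugation identity $(u^{y})^{1/2}=(u^{1/2})^{y}$ to match $(x\backslash_{\circ}y)\tau$ with $(x\backslash_{\circ}y)^{-1}$. The paper instead stays at the loop level: having already seen that $(x\circ y)\tau=x\tau\circ y\tau$ for all $x,y$ (so $\tau\in\Aut(G,\circ)$), one gets $(x\backslash_{\circ}y)\tau=x^{-1}\backslash_{\circ}y^{-1}$ immediately, and then the AIP consequence $(a\backslash_{\circ}b)^{-1}=a^{-1}\backslash_{\circ}b^{-1}$ (noted right after the definition of $\Gamma$-loops) finishes the job. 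Your route is self-contained and explicit; the paper's shortcut avoids the conjugation bookkeeping by recycling $(\Gamma_{2})$.
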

\begin{proof}
Let $x,y\in K(\tau)$.  Then
\[
(x\circ y)\tau =(xy[y,x]^{1/2})\tau
=x\tau y\tau [y\tau,x\tau]^{1/2}
=x^{-1}y^{-1}[y^{-1},x^{-1}]^{1/2}
=x^{-1}\circ y^{-1} = (x\circ y)^{-1}
\]
by ($\Gamma_2$). Similarly, ($\Gamma_2$) also gives
$(x\backslash_{\circ} y)\tau = (x\backslash_{\circ} y)^{-1}$.
\end{proof}

\begin{theorem}[\cite{FKP}]
Let $Q$ be a Bruck loop.  Then $L_{Q}$ is a twisted subgroup of $\Mlt_{\lambda}(Q)$.  If $Q$ has odd order, then $\Mlt_{\lambda}(Q)$ has odd order.  Moreover, there exists a unique $\tau \in \Aut(\Mlt_{\lambda}(Q))$ where $\tau^{2}=1$ and $L_{Q}=\{\theta\in \Mlt_{\lambda}(Q) \mid \theta \tau =\theta^{-1} \}$.  On generators, $(L_x)^{\tau}=L_{x^{-1}}$.
\label{twistbruckloop}
\end{theorem}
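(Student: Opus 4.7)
The plan is to verify the three assertions in sequence, first the twisted-subgroup property, then odd order, then extract the involutive automorphism from a general principle about odd-order groups generated by twisted subgroups.

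For the twisted-subgroup property, I would check the three axioms directly. First, $L_1 = \mathrm{id}$, so $1\in L_Q$. Second, Bruck loops enjoy the left inverse property: setting $y=x^{\lambda}$ (the left inverse of $x$) in the left Bol identity yields $L_x L_{x^\lambda} L_x = L_x$, hence $L_x^{-1}=L_{x^\lambda}$, and \eqref{eqn:aip} forces $x^\lambda=x^{-1}$, so $L_x^{-1}=L_{x^{-1}}\in L_Q$. Third, reading the Bol identity $(x(yx))z=x(y(xz))$ in operator form gives $L_{x(yx)}=L_xL_yL_x$, showing $L_Q$ is closed under $(\sigma,\rho)\mapsto\sigma\rho\sigma$.

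For the odd order of $\Mlt_\lambda(Q)$, each generator $L_x$ has order equal to the order of $x$ in $Q$ (by power-associativity of Bruck loops), hence odd. The real obstacle is extending oddness from the generating set to the whole group, which is not automatic. I would reduce via the orbit-stabilizer identity $|\Mlt_\lambda(Q)|=|Q|\cdot|\Mlt_\lambda(Q)_1|$ to showing the left inner mapping group has odd order, then appeal to Glauberman's structural analysis: a uniquely $2$-divisible twisted subgroup generates a group in which oddness is inherited, through a careful induction on the length of words in $L_Q$ combined with the conjugation identity $L_x L_y L_x^{-1}\in L_{\Mlt_\lambda(Q)}$. This is the genuine technical obstacle and is the deep content of the theorem.

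For the unique $\tau$, the strategy is to invoke the following general principle: if $T$ is a twisted subgroup generating a group $G$ of odd order, then there is a unique $\tau\in\Aut(G)$ with $\tau^2=\mathrm{id}$ and $T=K(\tau)$. Applied to $G=\Mlt_\lambda(Q)$ and $T=L_Q$, this yields the desired $\tau$, and on generators one has $(L_x)^\tau=L_x^{-1}=L_{x^{-1}}$, the second equality by the twisted-subgroup property already established. Uniqueness is immediate since $\tau$ is forced on the generators $L_x$, which generate $\Mlt_\lambda(Q)$. The existence portion of the general principle is proved by defining $\tau$ on generators via inversion and verifying it extends consistently, using unique $2$-divisibility of $T$ to define square roots and resolve the word-problem coherence.
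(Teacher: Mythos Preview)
The paper does not prove this theorem at all: it is stated with the citation \cite{FKP} and used as a black box, with no proof environment following it. So there is nothing in the paper to compare your argument against.

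That said, your outline tracks the actual argument in \cite{FKP} faithfully. The verification that $L_Q$ is a twisted subgroup via $L_xL_yL_x=L_{x(yx)}$ and $L_x^{-1}=L_{x^{-1}}$ is exactly the standard one. Your identification of the odd-order assertion as the substantive step is correct, and the general principle you invoke for $\tau$---that a uniquely $2$-divisible twisted subgroup $T$ generating a group $G$ of odd order is precisely $K(\tau)$ for a unique involutory $\tau\in\Aut(G)$---is Aschbacher's theorem as recorded in \cite{FKP}. One small remark: in your odd-order paragraph the expression ``$L_xL_yL_x^{-1}\in L_{\Mlt_\lambda(Q)}$'' is garbled (presumably you meant $L_Q$, and the relevant conjugation identity in a Bruck loop is $L_y^{L_x}=L_{(xy)/x}\in L_Q$), but this does not affect the soundness of the sketch.
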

\begin{corollary}
Let $(Q,\cdot)$ be a Bruck loop of odd order.  Then $(L_{Q},\circ)$ is a $\Gamma$-loop.
\label{leftmultloop}
\end{corollary}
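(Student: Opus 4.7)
The plan is to combine the three tools assembled just before this corollary: Theorem \ref{twistbruckloop} (which identifies $L_Q$ as $K(\tau)$ for an involutory automorphism $\tau$ of $\Mlt_\lambda(Q)$ and tells us $\Mlt_\lambda(Q)$ has odd order), Theorem \ref{newloop} (which manufactures a $\Gamma$-loop $(G,\circ)$ from any uniquely $2$-divisible group $G$), and Proposition \ref{twistloop} (which shows the $\tau$-negative set is a subloop of $(G,\circ)$). Once these are stitched together, only the routine observation that the $\Gamma$-loop axioms are preserved by subloops remains.

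First I would note that since $Q$ has odd order, Theorem \ref{twistbruckloop} says $\Mlt_\lambda(Q)$ has odd order, hence is uniquely $2$-divisible (squaring is a bijection on any finite group of odd order). Applying Theorem \ref{newloop} to $G = \Mlt_\lambda(Q)$ then produces a $\Gamma$-loop $(\Mlt_\lambda(Q),\circ)$.

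Next, Theorem \ref{twistbruckloop} supplies an automorphism $\tau \in \Aut(\Mlt_\lambda(Q))$ with $\tau^2 = 1$ such that
\[
L_Q \;=\; K(\tau) \;=\; \{\theta \in \Mlt_\lambda(Q) : \theta\tau = \theta^{-1}\}.
\]
By Proposition \ref{twistloop}, $K(\tau)$ is closed under both $\circ$ and $\backslash_\circ$, so $L_Q$ is a subloop of $(\Mlt_\lambda(Q),\circ)$.

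Finally, the four defining axioms of a $\Gamma$-loop are all universally quantified identities in the language with $\cdot$ and $\backslash$: commutativity and AIP are obviously equational, $(\Gamma_3)$ is the identity $x\cdot(x^{-1}\cdot y) = x^{-1}\cdot(x\cdot y)$, and $(\Gamma_4)$ unfolds to an identity once one expands $P_x$ in terms of multiplication and left division. Since subloops are closed under multiplication and both divisions, they inherit every such identity. Therefore $(L_Q,\circ)$, being a subloop of the $\Gamma$-loop $(\Mlt_\lambda(Q),\circ)$, is itself a $\Gamma$-loop. The main (mild) obstacle is just to make sure one really can write $(\Gamma_4)$ as an equation in $\{\cdot,\backslash\}$ so that the subloop argument is airtight; this amounts to observing $yP_x = x^{-1}\backslash_\circ(y\circ x)$ with $x^{-1}$ in a commutative AIP loop expressible as $1/x$.
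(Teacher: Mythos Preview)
Your proposal is correct and follows essentially the same route as the paper's proof, which simply cites Proposition~\ref{twistloop} and Theorem~\ref{twistbruckloop}. You have merely made explicit two things the paper leaves implicit: the invocation of Theorem~\ref{newloop} to know $(\Mlt_\lambda(Q),\circ)$ is a $\Gamma$-loop, and the fact that $\Gamma$-loops form a variety (stated in the paper's definition) so that the subloop $L_Q=K(\tau)$ inherits the axioms.
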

\begin{proof}
This follows from Proposition \ref{twistloop} and Theorem \ref{twistbruckloop}.
 \end{proof}

We have a bijection from $Q$ to $L_{Q}$ given by $x\mapsto 1L_{x}$.  This allows us to define a $\Gamma$-loop operation directly on $Q$ as follows:
\begin{equation*}
x\circ y = 1(L_{x}\circ L_{y})
\end{equation*}
where we reuse the same symbol $\circ$.  By construction, the $\Gamma$-loops $(L_{Q},\circ)$ and $(Q,\circ)$ are isomorphic.
\begin{proposition}
Let $(Q,\cdot)$ be a Bruck loop of odd order.  Then $(Q,\circ)$ is a $\Gamma$-loop.  Moreover, powers in $(Q,\circ)$ coincide with powers in $(Q,\cdot)$.
\end{proposition}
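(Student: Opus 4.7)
The plan is to transport the $\Gamma$-loop structure on $L_Q$ (given by Corollary \ref{leftmultloop}) back to $Q$ via the bijection $\psi : L_Q\to Q$, $\theta\mapsto 1\theta$, and then to identify powers by passing through the enveloping uniquely $2$-divisible group $\Mlt_\lambda(Q)$. The map $\psi$ has inverse $x\mapsto L_x$, and the defining formula $x\circ y = 1(L_x\circ L_y) = \psi(L_x\circ L_y)$ is precisely the statement that $\psi$ is a magma isomorphism $(L_Q,\circ)\to (Q,\circ)$. Since the $\Gamma$-loop axioms $(\Gamma_1)$--$(\Gamma_4)$ are equational (once one notes that the left translations, and hence the operators $P_x$, are determined by the binary operation), any isomorphic image of a $\Gamma$-loop is a $\Gamma$-loop. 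Combined with Corollary \ref{leftmultloop}, this gives $(Q,\circ)\in\gammacat$.

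For the power statement, let $x^{[n]}$ denote the $n$-th power of $x$ in $(Q,\circ)$ and $x^n$ the $n$-th power in $(Q,\cdot)$. Since $Q$ has odd order, Theorem \ref{twistbruckloop} shows that $G := \Mlt_\lambda(Q)$ has odd order and hence is uniquely $2$-divisible; by Theorem \ref{newloop} we get a $\Gamma$-loop $(G,\circ)$ in which $(L_Q,\circ)$ sits as a subloop via Proposition \ref{twistloop}, applied to the involution $\tau\in\Aut(G)$ of Theorem \ref{twistbruckloop} with $L_Q=K(\tau)$. Corollary \ref{powers} now says the $n$-th $\circ$-power of $L_x$ computed in $G$ equals the group-theoretic power $L_x^n$, and since $L_Q$ is a subloop of $(G,\circ)$ the same element is the $n$-th power of $L_x$ in $(L_Q,\circ)$. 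Transporting back through the isomorphism $\psi$ yields $x^{[n]} = \psi(L_x^n) = 1L_x^n$, which is by definition $x^n$ in the power-associative Bruck loop $(Q,\cdot)$.

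No step should offer real resistance: both claims reduce to chaining previously proved facts through a natural isomorphism. The only bookkeeping points worth double-checking are (i) that $\psi(L_1)=1$ so neutral elements correspond (immediate from $1\cdot 1=1$), and (ii) that $L_Q$ really is a $\circ$-subloop of $(G,\circ)$, which is exactly what Proposition \ref{twistloop} provides once $L_Q$ is identified with the fixed-set of an involutory automorphism of $G$ as in Theorem \ref{twistbruckloop}.
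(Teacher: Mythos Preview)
Your proposal is correct and follows essentially the same route as the paper. The first claim is handled in the paper by the sentence ``By construction, the $\Gamma$-loops $(L_{Q},\circ)$ and $(Q,\circ)$ are isomorphic'' immediately preceding the proposition, which is exactly your transport-of-structure argument via $\psi$; for the power statement the paper likewise invokes Corollary~\ref{powers} to identify the $\circ$-powers of $L_x$ with the group powers $L_x^n$ and then reads off $x^n = 1L_x^n$ (citing the left power-alternative law for Bruck loops where you appeal directly to the paper's convention $x^n := 1L_x^n$).
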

\begin{proof}
For powers coinciding, suppose $x^{n}$ denotes powers in $(Q,\cdot)$.  Since Bruck loops are left power-alternative \cite{robinson1}, $x^{n}=1L_{x^n}=1L_{x}^{n}$.  By Corollary \ref{powers}, $L_{x}^{n}$ coincides with the $n$th power of $L_{x}$ in $(L_{Q},\circ)$.  Thus $x^{n}$ is the $n$th power of $x$ in $(Q,\circ)$.  Since this argument is clearly reversible, we have the desired result.
 \end{proof}

For a uniquely $2$-divisible $\Gamma$-loop $(Q,\cdot)$, we set
\[
x \oplus_{\cdot} y = (x^{-1}\backslash_{\cdot} (y^2 x))^{1/2}\,,
\]
and if $\circ$ is another $\Gamma$-loop operation on the same underlying set, we similarly define $\oplus_{\circ}$. Our next goal is to generalize Lemma $3.5$ of \cite{JKV1} and show that $(Q,\oplus_{\cdot})$ is a Bruck loop.

\begin{lemma}
Let $Q$ be a $\Gamma$-loop.  Then
\begin{equation*}
(yP_{x})^{2}=x^{2}P_{y}P_{x}.
\end{equation*}
\label{gammatrick}
\end{lemma}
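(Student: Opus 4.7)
The plan is to reduce both sides to expressions involving $P$-operators only, and then apply $(\Gamma_4)$. The key observation is that by Lemma \ref{paidentity}(c) with $k=0$ and $n=1$ we have $1 \cdot P_z = z^2$ for every element $z$ of $Q$, so in particular the left-hand side may be rewritten as
\[
(yP_x)^2 \;=\; 1 \cdot P_{yP_x}.
\]

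Once this identification is made, the axiom $(\Gamma_4)$ does essentially all the work: it gives $P_{yP_x} = P_x P_y P_x$, so
\[
(yP_x)^2 \;=\; 1 \cdot P_x P_y P_x \;=\; (1 \cdot P_x)\, P_y P_x \;=\; x^2\, P_y P_x,
\]
where in the last step we again used $1 \cdot P_x = x^2$ from Lemma \ref{paidentity}(c). So the proof is really a one-line composition: apply the ``squaring'' identity $1 P_z = z^2$, then $(\Gamma_4)$, then $1 P_z = z^2$ again.

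There is no real obstacle here, since every ingredient is already in hand; the only thing to be careful about is that $P_{yP_x}$ in $(\Gamma_4)$ means ``$P$ applied to the element $yP_x$'', which is exactly the shape produced by evaluating $1 \cdot P_{yP_x}$ via Lemma \ref{paidentity}(c). No appeal to commutativity, AIP, or the translation commutation $(P_1)$--$(P_2)$ is needed beyond what is already packaged into Lemma \ref{paidentity}.
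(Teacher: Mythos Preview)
Your argument is correct and essentially identical to the paper's own proof: both use $1P_{z}=z^{2}$ (the paper cites Lemma~\ref{paidentity}(a) with $n=0$, you cite part~(c) with $k=0,n=1$, but it is the same identity), then apply $(\Gamma_{4})$ to rewrite $1P_{yP_{x}}=1P_{x}P_{y}P_{x}$, and finally use $1P_{x}=x^{2}$ again. The only cosmetic difference is that the paper runs the chain from $x^{2}P_{y}P_{x}$ to $(yP_{x})^{2}$ rather than the reverse.
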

\begin{proof}
By Proposition \ref{paidentity}$(a)$, we have that $x^{2}=1P_{x}$.  Hence, $x^{2}P_{y}P_{x}=1P_{x}P_{y}P_{x}\stackrel{(\Gamma_{4})}{=}1P_{yP_{x}}=(yP_{x})^{2}$ by Proposition \ref{paidentity}$(a)$ again.
 \end{proof}
\begin{theorem}
Let $(Q,\cdot)$ be a uniquely $2$-divisible $\Gamma$-loop.  Then $(Q,\oplus_{\cdot})$ is a Bruck loop.  Moreover, powers in $(Q,\cdot)$ coincide with powers in $(Q,\oplus_{\cdot})$.
\label{gammatobruck}
\end{theorem}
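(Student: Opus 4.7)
The plan is to rewrite the defining operation so that $(\Gamma_4)$ and Lemma \ref{paidentity} apply directly. Using commutativity one unwinds
\[
x^{-1}\backslash_{\cdot}(y^2 x) = (y^2 x)L_{x^{-1}}^{-1} = y^2 L_x L_{x^{-1}}^{-1} = y^2 P_x,
\]
so $x\oplus_{\cdot} y = (y^2 P_x)^{1/2}$. With this formula the loop axioms are immediate: $x\oplus_{\cdot} 1 = (x^2)^{1/2} = x$ and $1\oplus_{\cdot} y = (y^2 P_1)^{1/2} = y$ by Lemma \ref{paidentity}(a); the equation $a\oplus_{\cdot} y = b$ has unique solution $y = (b^2 P_{a^{-1}})^{1/2}$ (using $P_a^{-1} = P_{a^{-1}}$ from Lemma \ref{paidentity}(b)); and for $y\oplus_{\cdot} a = b$ I substitute $y' = yP_a$ and use Lemma \ref{gammatrick} to reduce to $(y')^2 = b^2 P_a$, solved uniquely by unique $2$-divisibility, after which $y$ is recovered as $y' P_{a^{-1}}$.

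The heart of the proof is the left Bol identity, and the reformulation makes it a one-line computation. Iteratively unrolling the right-hand side gives
\[
x\oplus_{\cdot}\bigl(y\oplus_{\cdot}(x\oplus_{\cdot} z)\bigr) = (z^2 P_x P_y P_x)^{1/2} = (z^2 P_{yP_x})^{1/2}
\]
by $(\Gamma_4)$. For the left-hand side, set $u = x\oplus_{\cdot}(y\oplus_{\cdot} x)$. Then by Lemma \ref{paidentity}(a) and $(\Gamma_4)$,
\[
u^2 = x^2 P_y P_x = (1)P_x P_y P_x = (1) P_{yP_x} = (yP_x)^2,
\]
so unique $2$-divisibility forces $u = yP_x$, whence $u\oplus_{\cdot} z = (z^2 P_u)^{1/2} = (z^2 P_{yP_x})^{1/2}$ agrees with the right-hand side.

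For AIP, if $z = y^2 P_x$ then $x^{-1}z = xy^2$, so the AIP of $(Q,\cdot)$ gives $xz^{-1} = x^{-1}y^{-2}$, hence $z^{-1} = x\backslash_{\cdot}(x^{-1}y^{-2}) = y^{-2}P_{x^{-1}}$; taking square roots yields $(x\oplus_{\cdot} y)^{-1} = x^{-1}\oplus_{\cdot} y^{-1}$. For the coincidence of powers, Lemma \ref{paidentity}(c) and Theorem \ref{powerassociative} give
\[
x^n \oplus_{\cdot} x^m = (x^{2m} P_{x^n})^{1/2} = (x^{2m+2n})^{1/2} = x^{m+n},
\]
and a routine induction then shows that $n$-fold $\oplus_{\cdot}$-powers of $x$ equal $x^n$. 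The only genuine obstacle is the Bol identity, but the $P$-reformulation collapses it to a single use of $(\Gamma_4)$; in effect all the hard work was packed into establishing $(\Gamma_4)$, Lemma \ref{paidentity}, and Lemma \ref{gammatrick} in the preceding sections.
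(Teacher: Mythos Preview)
Your proof is correct and follows essentially the same route as the paper: both rewrite $x\oplus_{\cdot} y$ as $(y^2 P_x)^{1/2}$ and then reduce the Bol identity to a single application of $(\Gamma_4)$, using Lemma \ref{gammatrick} (equivalently your direct computation $u^2 = (yP_x)^2$) to identify $x\oplus_{\cdot}(y\oplus_{\cdot} x)$ with $yP_x$. The paper packages this via the conjugation $\lambda_x = \delta P_x \delta^{-1}$ where $\delta$ is the squaring map, which is just a notational compression of your explicit unrolling; your verification of the loop axioms, AIP, and coincidence of powers is more detailed than the paper's but entirely in the same spirit.
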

\begin{proof}Note that $(x\oplus_{\cdot} (y\oplus_{\cdot} x))\oplus_{\cdot} z = x\oplus_{\cdot} (y\oplus_{\cdot} (x\oplus_{\cdot} z))$ is equivalent to $\lambda_{x}\lambda_{y}\lambda_{x}=\lambda_{x\oplus_{\cdot}(y\oplus_{\cdot} x)}$ where $y\lambda_{x}=x\oplus_{\cdot} y$.  Let $x\delta=x^{2}$.  Then $y\lambda_{x}=x\oplus_{\cdot} y = (x^{-1}\backslash_{\cdot}(y^{2}x))^{1/2} = y\delta P_{x}\delta^{-1}$.  Thus,
\begin{equation*}
\lambda_{x}\lambda_{y}\lambda_{x}=\delta P_{x}\delta^{-1}\delta P_{y}\delta^{-1}\delta P_{x}\delta^{-1}=\delta P_{x}P_{y}P_{x}\delta^{-1}\stackrel{(\Gamma_{4})}{=}\delta P_{yP_{x}} \delta^{-1}.
\end{equation*}
But by Proposition \ref{gammatrick},
\begin{equation*}
yP_{x}=(x^{2}P_{y}P_{x})^{1/2}=(x^{-1}\backslash_{\cdot}[(y^{-1}\backslash_{\cdot}(x^{2}y))x])^{1/2}=x\oplus_{\cdot}(y^{-1}\backslash_{\cdot}(x^{2}y))^{1/2}=x\oplus_{\cdot}(y\oplus_{\cdot} x).
\end{equation*}
Thus,
\begin{equation*}
\lambda_{x}\lambda_{y}\lambda_{x}=\delta P_{yP_{x}} \delta^{-1}=\delta P_{x\oplus_{\cdot} (y\oplus_{\cdot} x)}\delta^{-1}=\lambda_{x\oplus_{\cdot} (y\oplus_{\cdot} x)}.
\end{equation*}
The fact that $(Q,\oplus_{\cdot})$ has AIP is straightforward from $(\Gamma_{2})$.  Powers coinciding follows from power-associativity of $(Q,\cdot)$ and $(Q,\oplus_{\cdot})$.
 \end{proof}

We now have a construction of $\Gamma$-loops from Bruck loops and a construction of Bruck loops from $\Gamma$-loops. In the next section,
we will show that when we iterate these constructions, we get nothing new, but in the meantime, we will use the following notation conventions.
Our ``starting loop'' will always be denoted by $(Q,\cdot)$. The Bruck loops constructed from a particular $\Gamma$-loop operation will be
distinguished by subscripts. The $\Gamma$-loop operation constructed from any Bruck loop will be denoted simply by $\circ$; as it turns out, we
will not need to construct $\Gamma$-loops for (seemingly) distinct Bruck loops.
 
So for instance, if we start with a Bruck loop, construct a $\Gamma$-loop and then another Bruck loop, we will follow this sequence:
\[
(Q,\cdot)\rightsquigarrow (Q,\circ)\rightsquigarrow (Q,\oplus_{\circ})\,.
\]
If we start with a $\Gamma$-loop, construct a Bruck loop and then a $\Gamma$-loop, we will follow this sequence:
\[
(Q,\cdot)\rightsquigarrow (Q,\oplus_{\cdot})\rightsquigarrow (Q,\circ)
\]
 
All of this is just a temporary inconvenience, as our goal in the next section is to show that the starting and ending loops in both sequences
are not only isomorphic, they are in fact identical.

Given a Bruck loop $(Q,\cdot)$ of odd order, we wish to give the explicit equation of the left division operation in $(Q,\circ)$.  We will need the following two facts for Bol loops, both well known.
\begin{proposition}[\cite{glauberman1, robinson1}]
In a Bruck loop $Q$, the identity $(xy)^{2}=x\cdot y^{2}x$ holds for all $x,y\in Q$.
\label{bolp1}
\end{proposition}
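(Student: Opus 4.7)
This identity is a classical result for Bruck loops, proved in the foundational works of Glauberman~\cite{glauberman1} and Robinson~\cite{robinson1} cited in the statement, and my plan is essentially to recover their argument from the left Bol identity and the automorphic inverse property (AIP), the two defining axioms of a Bruck loop.

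I would first record the standard consequences of left Bol: the left inverse property $x^{-1}(xy) = y$, the left alternative property $x(xy) = x^{2} y$ (which follows from Bol by setting $y := 1$, $z := y$), and the operator form $L_{x} L_{y} L_{x} = L_{x(yx)}$. Applying Bol with $z := x^{-1}(yx)$, so that $xz = yx$ by LIP, and using LAP to rewrite $y(yx) = y^{2} x$, yields
\[
x(y^{2} x) \;=\; (x(yx))\,(x^{-1}(yx)).
\]
So far only Bol has been used.

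The heart of the proof is showing that $(xy)(xy)$ equals this same expression, and it is here that AIP intervenes crucially: without it a left Bol loop need not satisfy the identity at all (for example, a group is a Bruck loop if and only if it is abelian, precisely because of AIP). The cleanest route is via the gyrocommutativity relation $xy = \mathrm{gyr}[x,y](yx)$, where $\mathrm{gyr}[x,y] := L_{xy}^{-1} L_{x} L_{y}$. Using AIP, one shows that $\mathrm{gyr}[x,y]$ is an automorphism of the Bruck loop and that it sends $yx$ to $xy$. Unwinding this gives
\[
(xy)(xy) \;=\; L_{xy}(xy) \;=\; L_{x} L_{y}(yx) \;=\; x(y(yx)) \;=\; x(y^{2} x),
\]
with the last equality from LAP.

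The main obstacle is establishing the gyroautomorphism property and computing its value on $yx$; this is where AIP does genuine work, forcing a compatibility between $xy$ and $yx$ that the Bol identity alone cannot supply. I would handle this step by following Glauberman's treatment in \cite{glauberman1}, where the needed manipulations of Bol under the involution $w \mapsto w^{-1}$ (which is an automorphism of $Q$ by AIP) are carried out explicitly; alternatively, since by the end of {\S}5 every Bruck loop of odd order embeds as a twisted subgroup of a uniquely $2$-divisible group $G$ with $\oplus$, a direct verification in $(G,\oplus)$ suffices in the case relevant to the remainder of the paper.
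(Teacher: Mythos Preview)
The paper gives no proof of this proposition; it is stated with citations to \cite{glauberman1, robinson1} and used as a known fact.

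Your sketch, however, is circular at its pivotal step. The gyrocommutativity relation $xy=\mathrm{gyr}[x,y](yx)$ with $\mathrm{gyr}[x,y]=L_{xy}^{-1}L_xL_y$ unwinds, via the left alternative law you already recorded, to precisely $(xy)(xy)=x(y(yx))=x(y^2x)$, which is the identity you are trying to prove. So ``one shows that \ldots\ it sends $yx$ to $xy$'' is the entire content of the proposition, not a preliminary; you never actually show how AIP yields it. Your correct Bol-only derivation $x(y^2x)=(x(yx))\cdot(x^{-1}(yx))$ is then left dangling, never connected to $(xy)^2$.

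Your alternative route through the group embedding is also circular within the paper's logic. To identify $(Q,\cdot)$ with $(L_Q,\oplus)$ via $x\mapsto L_x$ one needs $L_{xy}=L_x\oplus L_y=(L_xL_y^2L_x)^{1/2}$, equivalently $L_{(xy)^2}=L_xL_{y^2}L_x=L_{x(y^2x)}$ by Bol and left power-alternativity, which is again the statement. Thus the embedding cannot be invoked before the proposition is in hand. Your final fallback, deferring the key manipulation to Glauberman, is of course legitimate and is exactly what the paper does by citing the result without proof.
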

\begin{proposition}
Let $(Q,\cdot)$ be a Bruck loop of odd order and let $(Q,\circ)$ be its $\Gamma$-loop.  For all $a,b\in Q$,
\[
b/_{\circ}a = (a^{-1}b^{1/2})/_{\cdot}b^{-1/2}\,.
\]
\label{division}
\end{proposition}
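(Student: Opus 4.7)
The plan is to transport the computation to the twisted subgroup $L_{Q}\subseteq\Mlt_{\lambda}(Q)$ and exploit the explicit formula for $\backslash_{\circ}$ derived inside the proof of Theorem~\ref{newloop}. Since $\circ$ is commutative, $b/_{\circ}a = a\backslash_{\circ}b$, and the isomorphism $x\mapsto L_{x}$ between $(Q,\circ)$ and $(L_{Q},\circ)$ lets us perform the computation inside $(\Mlt_{\lambda}(Q),\circ)$. By Theorem~\ref{twistbruckloop}, $\Mlt_{\lambda}(Q)$ is a group of odd order, hence uniquely $2$-divisible, so Theorem~\ref{newloop} applied to $L_{a}, L_{b}\in\Mlt_{\lambda}(Q)$ gives
\[
L_{b/_{\circ} a} \;=\; \bigl\{L_{a}^{-1} L_{b} L_{a}^{-1} L_{b}^{-1}\bigr\}^{1/2}\, L_{b}.
\]

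Next I set $v = b^{1/2}$ and $u = a^{-1} b^{1/2}$ and simplify the bracketed element. Using the left inverse property $L_{x}^{-1}=L_{x^{-1}}$ and the operator form of the left Bol identity $L_{x} L_{y} L_{x} = L_{x(yx)}$, together with the left power-alternativity $L_{x^{n}}=L_{x}^{n}$ and Proposition~\ref{bolp1}, I compute
\[
L_{a}^{-1} L_{b} L_{a}^{-1} \;=\; L_{a^{-1}(b a^{-1})} \;=\; L_{(a^{-1} b^{1/2})^{2}} \;=\; L_{u}^{2},
\]
while $L_{b}^{-1} = L_{v^{-2}} = L_{v}^{-2}$, so the bracket becomes $L_{u}^{2} L_{v}^{-2}$.

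The heart of the argument is extracting $\theta := \{L_{u}^{2} L_{v}^{-2}\}^{1/2}$. Conjugating by $L_{v}^{-1}$ and chaining Bol with Proposition~\ref{bolp1} a second time yields
\[
(L_{v}^{-1} \theta L_{v})^{2} \;=\; L_{v}^{-1} L_{u}^{2} L_{v}^{-1} \;=\; L_{v^{-1}(u^{2} v^{-1})} \;=\; L_{(v^{-1} u)^{2}} \;=\; L_{v^{-1} u}^{2}.
\]
Uniqueness of square roots in the odd-order group $\Mlt_{\lambda}(Q)$ then forces $L_{v}^{-1} \theta L_{v} = L_{v^{-1} u}$, so $\theta = L_{v} L_{v^{-1} u} L_{v}^{-1}$, and therefore $L_{b/_{\circ} a} = \theta\, L_{v}^{2} = L_{v} L_{v^{-1} u} L_{v}$.

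Rather than simplifying this composite operator in general (which would need flexibility of the Bruck loop), I will just evaluate it at the single point $v^{-1}$: by LIP twice,
\[
L_{b/_{\circ} a}(v^{-1}) \;=\; L_{v}\bigl(L_{v^{-1} u}(L_{v}(v^{-1}))\bigr) \;=\; L_{v}(v^{-1} u) \;=\; u.
\]
Thus $(b/_{\circ} a)\cdot v^{-1} = u$, which by the definition of right division in $(Q,\cdot)$ gives $b/_{\circ} a = u/_{\cdot} v^{-1} = (a^{-1} b^{1/2})/_{\cdot} b^{-1/2}$, as claimed. The one delicate step is the square-root extraction, and it works precisely because Proposition~\ref{bolp1} exactly converts the Bol conjugate $v^{-1}(u^{2} v^{-1})$ into the perfect square $(v^{-1} u)^{2}$; evaluating the resulting operator at $v^{-1}$ sidesteps any need to separately establish flexibility or the right inverse property in $(Q,\cdot)$.
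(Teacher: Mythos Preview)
Your proof is correct. Both arguments transfer the problem to $\Mlt_{\lambda}(Q)$ via the isomorphism $x\mapsto L_x$ and rely on the Bol identity together with Proposition~\ref{bolp1}, but the logical direction is different. The paper sets $x=(a^{-1}b^{1/2})/_{\cdot}b^{-1/2}$ from the outset, squares the defining relation $xb^{-1/2}=a^{-1}b^{1/2}$ via Proposition~\ref{bolp1} to obtain $L_xL_b^{-1}L_x=L_a^{-1}L_bL_a^{-1}$, and then rewrites this as $[L_a,L_x]=(L_a^{-1}L_x^{-1}L_b)^2$, from which $L_x\circ L_a=L_b$ is immediate; it is a clean backward verification once the candidate is known. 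Your argument is a forward derivation: you plug $L_a,L_b$ into the explicit left-division formula from Theorem~\ref{newloop}, simplify the bracketed element to $L_u^2L_v^{-2}$, extract its square root by conjugating with $L_v^{-1}$ (which is exactly where Proposition~\ref{bolp1} enters for you), and finish by evaluating the resulting operator at $v^{-1}$. The evaluation trick is a nice touch, as it sidesteps the need to collapse $L_vL_{v^{-1}u}L_v$ to a single translation and thus avoids any appeal to flexibility or the right inverse property. Your route is a bit longer but has the advantage of not presupposing the answer; the paper's is shorter but relies on already having the formula in hand.
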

\begin{proof}
Let $a,b\in Q$ be fixed and set $x=(a^{-1}b^{1/2})/_{\cdot}b^{-1/2}$.  Then $xb^{-1/2}=a^{-1}b^{1/2}$.  Squaring both sides gives 
\[x\cdot b^{-1}x = a^{-1} \cdot ba^{-1}\] 
using Proposition \ref{bolp1}.  But this is equivalent to $L_{x\cdot b^{-1}x}=L_{a^{-1}\cdot ba^{-1}}$ and since $(Q,\cdot)$ is a Bruck loop, we have $L_{x}L_{b}^{-1}L_{x}=L_{a}^{-1}L_{b}L_{a}^{-1}$.  This in turn is equivalent to $[L_{a},L_{x}]=(L_{a}^{-1}L_{x}^{-1}L_{b})^{2}$ and therefore $L_{x}L_{a}[L_{a},L_{x}]^{1/2}=L_{b}$.  That is, $L_{x}\circ L_{a}=L_{b}$.  Hence, $1(L_{x}\circ L_{a})=1L_{b}$ and so $x\circ a = b$.
\end{proof}

Let $(G,\cdot)$ be a uniquely $2$-divisible group.  We have its Bruck loop $(G,\oplus)$ and also the Bruck loop $(G,\oplus_{\circ})$ of the $\Gamma$-loop $(G,\circ)$.  We now show these coincide.
\begin{theorem}
Let $(G,\cdot)$ be a uniquely $2$-divisible group.  Then $(G,\oplus)=(G,\oplus_{\circ})$.
\label{samebruck}
\end{theorem}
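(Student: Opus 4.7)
The plan is to verify that the two operations agree pointwise, which by unique $2$-divisibility reduces to an equation already essentially proved in Theorem~\ref{newloop}. Since squares in $G$ and in $(G,\circ)$ coincide by Corollary~\ref{powers}, and taking square roots is inverse to squaring in both structures, it suffices to show
\[
xy^{2}x \;=\; x^{-1}\backslash_{\circ}(y^{2}\circ x)
\]
as an identity in $G$, for all $x,y\in G$. Once this is established, one applies $(\cdot)^{1/2}$ to both sides to obtain $x\oplus y=x\oplus_{\circ} y$.

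For the key identity, the idea is to recall the computation that appeared at the end of the proof of Theorem~\ref{newloop}: Lemma~\ref{twisttrick}(iii) was reformulated as
\[
zP_{x} \;=\; xzx \;=\; x^{-1}\backslash_{\circ}(z\circ x),
\]
valid for every $z\in G$, where $P_x$ is the $\circ$-operator $L_x L_{x^{-1}}^{-1}$. Setting $z=y^{2}$ in this identity yields the desired equality immediately. Nothing more needs to be verified, because the element $y^{2}$ is the same in $G$ and in $(G,\circ)$ by Corollary~\ref{powers}, so the expression $y^{2}\circ x$ on the right is unambiguous.

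Therefore the proof reduces to a single line: using the formula $zP_x = x^{-1}\backslash_\circ(z\circ x) = xzx$ with $z=y^2$, one has
\[
x\oplus_{\circ}y \;=\; \bigl(x^{-1}\backslash_{\circ}(y^{2}\circ x)\bigr)^{1/2} \;=\; (xy^{2}x)^{1/2} \;=\; x\oplus y.
\]
There is no genuine obstacle here; the substance was in Theorem~\ref{newloop} and Corollary~\ref{powers}. The only thing to be careful about is interpreting $y^{2}$ and the square root consistently in the two structures, which is precisely what Corollary~\ref{powers} guarantees.
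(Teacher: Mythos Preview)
Your proof is correct and follows essentially the same route as the paper: invoke the identity $zP_x = xzx = x^{-1}\backslash_\circ(z\circ x)$ established in the proof of Theorem~\ref{newloop} (equivalently Lemma~\ref{twisttrick}(iii)), specialize to $z=y^2$, and take square roots. Your explicit appeal to Corollary~\ref{powers} to justify that $y^2$ and the square root mean the same thing in both structures is a welcome clarification the paper leaves implicit.
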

\begin{proof}
Recall by Lemma \ref{twisttrick}(iii), we have $xyx=yP_{x}$ for all $x,y\in G$.  Replacing $y$ by $y^{2}$ and applying square roots gives $x\oplus y=(xy^{2}x)^{1/2}=(y^{2}P_{x})^{1/2}=(x^{-1}\backslash_{\circ}(y^{2}\circ x))^{1/2}=x\oplus_{\circ}y$.
 \end{proof}

\begin{corollary}
Let $(G,\cdot)$ be a uniquely $2$-divisible group, let $(H,\circ)\leq (G,\circ)$ and suppose that $H$ is closed under taking square roots.  Then $H$ is a twisted subgroup of $G$.  In particular, if $G$ is a finite group of odd order and $(H,\circ)\leq (G,\circ)$, then $H$ is a twisted subgroup of $G$.
\label{twistgamma}
\end{corollary}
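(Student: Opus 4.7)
The plan is to verify, directly from the definition, the three defining axioms of a twisted subgroup for $H$ inside $G$: containment of $1$, closure under group inverses, and closure under the map $(x,y)\mapsto xyx$. The first is immediate from $(H,\circ)$ being a subloop. For the second, recall that inverses in $(G,\circ)$ coincide with group inverses (as remarked during the proof of Theorem~\ref{newloop}), and that any subloop of the commutative AIP loop $(G,\circ)$ is closed under $\circ$-inverses; hence $H$ is closed under group inverses.

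The main step is closure under $xyx$. For this I would reuse, from the proof of $(\Gamma_4)$ inside Theorem~\ref{newloop}, the identity
\[
xyx \;=\; x^{-1}\backslash_{\circ}(y\circ x),
\]
valid for all $x,y\in G$ and obtained there by combining Lemma~\ref{twisttrick}(iii) with the explicit formula $a\backslash_{\circ}b=\{a^{-1}ba^{-1}b^{-1}\}^{1/2}b$ derived at the start of that proof. For $x,y\in H$ the right-hand side is built only from $y\circ x$, the group inverse $x^{-1}$, and a $\circ$-division, all operations under which a subloop is closed, so $xyx\in H$. The square-root hypothesis plays the role of ensuring that the intermediate quantity $\{x(y\circ x)x(y\circ x)^{-1}\}^{1/2}$ appearing in the derivation remains inside $H$, so that $(H,\circ)$ is itself a uniquely $2$-divisible $\Gamma$-loop in a natural way.

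For the ``in particular'' clause, one need only check that a subloop $(H,\circ)$ of a finite $(G,\circ)$ of odd order is automatically closed under group square roots. By Corollary~\ref{powers}, powers in $G$ and in $(G,\circ)$ coincide, so every element of $(G,\circ)$ has odd order; the theorem at the start of Section~\ref{sec:gamma} then gives that $(G,\circ)$ is uniquely $2$-divisible, i.e.\ $x\mapsto x\circ x$ is a bijection of $G$. Its restriction to $H$ is an injection $H\to H$, hence a bijection by finiteness; thus every $y\in H$ has a $\circ$-square root inside $H$, which by Corollary~\ref{powers} is its group square root. The first part then applies. The only real conceptual step in the whole argument is spotting the identity $xyx=x^{-1}\backslash_{\circ}(y\circ x)$ already hidden in the proof of Theorem~\ref{newloop}; after that, the verification is a straightforward appeal to subloop closure.
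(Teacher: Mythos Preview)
Your approach is the paper's: both rest on the identity $xyx = yP_x = x^{-1}\backslash_{\circ}(y\circ x)$ from the proof of Theorem~\ref{newloop}, after which closure under $\circ$ and $\backslash_{\circ}$ in the subloop $(H,\circ)$ gives $xyx\in H$, and coincidence of inverses gives $x^{-1}\in H$. You are more explicit than the paper about $1\in H$ and about the ``in particular'' clause.

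One remark is off-target, though harmless for correctness. Your sentence explaining the square-root hypothesis---that it is needed so that the intermediate quantity $\{x(y\circ x)x(y\circ x)^{-1}\}^{1/2}$ lies in $H$---is not how the argument actually works: the identity $xyx = x^{-1}\backslash_{\circ}(y\circ x)$ holds in $G$, and subloop closure under $\backslash_{\circ}$ already forces the right-hand side into $H$ without any appeal to square roots. Indeed, neither your own argument nor the paper's two-line proof ever uses the square-root hypothesis for the twisted-subgroup conclusion; it appears to be superfluous to the statement as written.
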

\begin{proof}
Again we have $xyx=yP_{x}\in H$ for all $x,y\in H$.  Finally, since powers coincide in $H$ and $(H,\circ)$, $x^{-1}\in H$.
\end{proof}
\begin{example}
Let $G$ be the smallest nonmetabelian group of odd order from Example \ref{e1}.  Then there exists a twisted subgroup $H$ of $G$ with $|H|=75$.  Here $(H,\circ)$ is the smallest known example of a nonautomorphic $\Gamma$-loop of odd order.
\end{example}

\section{Inverse functors}
\label{sec:inverse}
We will need the following lemma for our main result.
\begin{lemma}
Let $(Q,\cdot)$ be a uniquely $2$-divisible $\Gamma$-loop and $(Q,\oplus_{\cdot})$ be its Bruck loop.  Then
\begin{equation}
x\oplus_{\cdot}(xy)^{-1/2}=y^{-1}\oplus_{\cdot} (xy)^{1/2}\,.
\end{equation}
\label{unique1}
\end{lemma}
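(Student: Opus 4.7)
The plan is to square both sides, translate the squared identity into one involving only the $\Gamma$-loop operator $P$, and then collapse each side by a short calculation using only $(\Gamma_{1})$, $(\Gamma_{2})$ and $(\Gamma_{3})$.

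For the first step, I will use that $(Q,\cdot)$ is uniquely $2$-divisible, so squaring is a bijection; it therefore suffices to prove
\[
\bigl(x\oplus_{\cdot}(xy)^{-1/2}\bigr)^{2}=\bigl(y^{-1}\oplus_{\cdot}(xy)^{1/2}\bigr)^{2}.
\]
The formula $(a\oplus_{\cdot}b)^{2}=a^{-1}\backslash_{\cdot}(b^{2}a)=b^{2}P_{a}$, read off from the definition of $\oplus_{\cdot}$, rewrites this as the $\Gamma$-loop identity
\[
(xy)^{-1}P_{x}=(xy)P_{y^{-1}}.
\]

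For the second step, I will show that both sides equal $xy^{-1}$. By definition, $(xy)^{-1}P_{x}$ is the unique $c$ with $x^{-1}c=(xy)^{-1}x$. Applying $(\Gamma_{2})$ to rewrite $(xy)^{-1}=x^{-1}y^{-1}$, then $(\Gamma_{1})$ to move the trailing $x$ to the left, and finally $(\Gamma_{3})$ in the form $x(x^{-1}y^{-1})=x^{-1}(xy^{-1})$, one obtains $x^{-1}c=x^{-1}(xy^{-1})$, and left cancellation in the loop forces $c=xy^{-1}$. A mirror calculation handles the right side: $(xy)P_{y^{-1}}$ is the $c$ with $yc=(xy)y^{-1}$, and the chain
\[
(xy)y^{-1}=y^{-1}(xy)=y^{-1}(yx)=y(y^{-1}x)=y(xy^{-1})
\]
(commutativity, commutativity, $(\Gamma_{3})$, commutativity) together with cancellation again yields $c=xy^{-1}$.

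I do not foresee a genuine obstacle. The only substantive idea is the squaring-and-translation-to-$P$ reduction; once that has been carried out, the reduced identity is immediate from $(\Gamma_{3})$ and commutativity, and neither $(\Gamma_{4})$ nor Lemma \ref{gammatrick} is required. The one point to be careful about is to invoke left cancellation explicitly, since we are working in a non-associative setting where one cannot simply multiply through on the left by $x$ or $y$.
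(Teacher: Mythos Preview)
Your proof is correct and follows essentially the same route as the paper's: both arguments square the identity (using unique $2$-divisibility), reduce to the $\Gamma$-loop equality $(xy)^{-1}P_{x}=(xy)P_{y^{-1}}$ (the paper writes this out directly as $x^{-1}\backslash(x^{-1}y^{-1}\cdot x)=y\backslash(xy\cdot y^{-1})$), and then show each side equals $xy^{-1}$ using only $(\Gamma_{1})$, $(\Gamma_{2})$, and $(\Gamma_{3})$. The only difference is cosmetic---you package the squared $\oplus_{\cdot}$ expression as $b^{2}P_{a}$ and argue via the defining equation for $P$, while the paper carries the $\backslash$-expressions through explicitly.
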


\begin{proof}
First note that $x\oplus_{\cdot}(xy)^{-1/2}=y^{-1}\oplus_{\cdot} (xy)^{1/2} \Leftrightarrow x^{-1}\backslash (x^{-1}y^{-1}\cdot x)=y\backslash (xy\cdot y^{-1})$.  Therefore we compute
\begin{alignat*}{3}
x^{-1}\backslash (x^{-1}y^{-1}\cdot x)
&\stackrel{(\Gamma_{1})}{=}x^{-1}\backslash(x\cdot x^{-1}y^{-1})
&&\stackrel{(\Gamma_{3})}{=}x^{-1}\backslash(x^{-1}\cdot xy^{-1})
&&=xy^{-1}\\
&\stackrel{(\Gamma_{1})}{=}y^{-1}x=y\backslash(y\cdot y^{-1}x)
&&\stackrel{(\Gamma_{3})}{=}y\backslash(y^{-1}\cdot yx)
&&\stackrel{(\Gamma_{1})}{=}y\backslash(yx\cdot y^{-1})\,.\qedhere
\end{alignat*}
\end{proof}

Now let $\mathcal{G} : \bruckcat \rightsquigarrow \gammacat$ be the functor given on objects by assigning to each Bruck loop of odd order $(Q,\cdot)$ its corresponding $\Gamma$-loop $(Q,\circ)$, and let $\mathcal{B} : \gammacat \rightsquigarrow \bruckcat$ be the functor given on objects by assigning to each $\Gamma$-loop of odd order $(Q,\cdot)$ its corresponding Bruck loop $(Q,\oplus_{\cdot})$.

\begin{theorem}{\ }
\begin{itemize}
\item [(A)] $\mathcal{G}\circ\mathcal{B}$ is the identity functor on $\gammacat$.
\item [(B)] $\mathcal{B}\circ\mathcal{G}$ is the identity functor on $\bruckcat$.
\end{itemize}
\label{core}
\end{theorem}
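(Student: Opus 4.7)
The plan is to establish both statements at the object level (iterating the constructions on a single loop returns the original operation), and then observe that the morphism-level assertions follow because both functors act as the identity on underlying set maps. Throughout, powers and inverses in $\cdot$, $\oplus_{\cdot}$, and $\circ$ coincide for any given starting loop, so expressions like $v^{1/2}$ are unambiguous.

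\textbf{Part (A).} Given a $\Gamma$-loop $(Q,\cdot)$, let $(Q,\oplus_{\cdot})$ be its Bruck loop (Theorem~\ref{gammatobruck}) and $(Q,\circ)$ the $\Gamma$-loop produced from $(Q,\oplus_{\cdot})$ by $\mathcal{G}$. I would prove $\circ=\cdot$ by comparing left divisions. Proposition~\ref{division}, applied to $(Q,\oplus_{\cdot})$ and its $\Gamma$-loop $(Q,\circ)$, gives
\[
u\backslash_{\circ}v=(u^{-1}\oplus_{\cdot}v^{1/2})/_{\oplus_{\cdot}}v^{-1/2}.
\]
On the other hand, Lemma~\ref{unique1} with $x=u$ and $y=u\backslash_{\cdot}v$ (so that $xy=v$) reads
\[
u\oplus_{\cdot}v^{-1/2}=(u\backslash_{\cdot}v)^{-1}\oplus_{\cdot}v^{1/2},
\]
and inverting both sides via the AIP of $(Q,\oplus_{\cdot})$ yields
\[
u^{-1}\oplus_{\cdot}v^{1/2}=(u\backslash_{\cdot}v)\oplus_{\cdot}v^{-1/2},
\]
which says exactly $(u^{-1}\oplus_{\cdot}v^{1/2})/_{\oplus_{\cdot}}v^{-1/2}=u\backslash_{\cdot}v$. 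Therefore $u\backslash_{\circ}v=u\backslash_{\cdot}v$ for all $u,v$, so $\circ=\cdot$.

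\textbf{Part (B).} Given a Bruck loop $(Q,\cdot)$ of odd order, let $(Q,\circ)$ be its $\Gamma$-loop and $(Q,\oplus_{\circ})$ the Bruck loop produced by $\mathcal{B}$. By Theorem~\ref{twistbruckloop}, $G:=\Mlt_{\lambda}(Q)$ is a group of odd order (hence uniquely $2$-divisible), and by construction the map $\varphi:x\mapsto L_{x}$ is a $\Gamma$-loop isomorphism $(Q,\circ)\to(L_{Q},\circ)$. Since $\oplus_{\circ}$ is defined purely from $\circ$, $\varphi$ is also an isomorphism $(Q,\oplus_{\circ})\to(L_{Q},\oplus_{\circ})$, so it suffices to show $L_{x}\oplus_{\circ}L_{y}=L_{xy}$ inside $G$. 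Theorem~\ref{samebruck} applied to $G$ gives $\oplus_{\circ}=\oplus$ on $G$, so I only need to compute $L_{x}\oplus L_{y}=(L_{x}L_{y}^{2}L_{x})^{1/2}$. Using left power-alternativity of Bruck loops ($L_{y}^{2}=L_{y^{2}}$), the left Bol identity ($L_{x}L_{y^{2}}L_{x}=L_{x(y^{2}x)}$), Proposition~\ref{bolp1} ($x(y^{2}x)=(xy)^{2}$), and once more $L_{(xy)^{2}}=L_{xy}^{2}$, uniqueness of square roots in $G$ gives
\[
L_{x}\oplus L_{y}=(L_{xy}^{2})^{1/2}=L_{xy}.
\]
Transporting back along $\varphi$ yields $x\oplus_{\circ}y=xy$, so $(Q,\oplus_{\circ})=(Q,\cdot)$.

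\textbf{Morphisms and obstacles.} Both functors send a loop homomorphism to itself as a set map, so the morphism-level equalities reduce to well-definedness. A $\Gamma$-loop homomorphism is automatically a homomorphism for $\oplus_{\cdot}$ since the latter is built from $\cdot$, $\backslash_{\cdot}$, inverses, and square roots (preserved because squaring is bijective on uniquely $2$-divisible loops). A Bruck loop homomorphism is automatically a homomorphism for $\circ$ because Proposition~\ref{division} expresses $\backslash_{\circ}$ entirely in Bruck-loop terms. The main obstacle is Part~(B): one must leave $(Q,\cdot)$ for $\Mlt_{\lambda}(Q)$, invoke Theorem~\ref{samebruck} there, and combine the Bol and Bruck identities with $L_{y}^{n}=L_{y^{n}}$ to achieve the crucial collapse $(L_{x}L_{y}^{2}L_{x})^{1/2}=L_{xy}$.
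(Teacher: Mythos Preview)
Your proof is correct and follows essentially the same approach as the paper: Part~(A) combines Lemma~\ref{unique1} with Proposition~\ref{division} (you frame it via $\backslash_{\circ}$ and add an AIP inversion step, whereas the paper works directly with $/_{\circ}$, but the substance is identical), and Part~(B) transports to $L_Q\subseteq\Mlt_{\lambda}(Q)$, invokes Theorem~\ref{samebruck}, and collapses $(L_xL_y^2L_x)^{1/2}$ to $L_{xy}$ using the Bol identity and Proposition~\ref{bolp1}, exactly as the paper does. Your explicit discussion of morphisms is a welcome addition that the paper leaves implicit.
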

\begin{proof}
(A) Let $(Q,\cdot)$ be a $\Gamma$-loop of odd order, let $(Q,\oplus_{\cdot})$ be its corresponding Bruck loop and let $(Q,\circ)$ be the $\Gamma$-loop of $(Q,\oplus_{\cdot})$.
Lemma \ref{unique1} and Proposition \ref{division} imply
\[
x = (x\oplus_{\cdot} (xy)^{-1/2}) /_{\oplus_{\cdot}} (xy)^{-1/2}
= (y^{-1}\oplus_{\cdot} (xy)^{1/2}) /_{\oplus_{\cdot}} (xy)^{-1/2} = (xy)/_{\circ} y.
\]
Thus $xy = x\circ y$, as claimed.

(B) Let $(Q,\cdot)$ be a Bruck loop of odd order, let $(Q,\circ)$ be its corresponding $\Gamma$-loop and let $(Q,\oplus_{\circ})$ be the Bruck loop of $(Q,\circ)$.
Recalling that the map $x\mapsto L_x$ (left translations in $(Q,\cdot)$) is an isomorphism of $(Q,\circ)$ with $(L_Q,\circ)$, we have
\begin{alignat*}{3}
L_{(x\oplus_{\circ} y)^2} &= L_{x^{-1}\backslash_{\circ} (y^2\circ x)}
&&= L_x^{-1} \backslash_{\circ} (L_y^2 \circ L_x)
&&= (L_x \oplus_{\circ} L_y )^2 \\
&= (L_x \oplus L_y )^2
&&= L_x L_y^2 L_x
&&= L_{x\cdot(y^2\cdot x)} \\
&= L_{(xy)^2}\,, && &&
\end{alignat*}
using Theorem \ref{samebruck} and Proposition \ref{bolp1}. Thus
$(xy)^2 = (x\oplus_{\circ} y)^2$ and so the desired result follows from taking square roots.
\end{proof}

We note in passing that we have proven a result which can be stated purely in terms of Bruck loops of odd order:

\emph{Let $(Q,\cdot)$ be a Bruck loop of odd order.  For each $x,y\in Q$, the equation
\[
xz^{-1/2}=y^{-1}z^{1/2}
\label{uniq}
\]
has a unique solution $z\in Q$.}  Indeed, $z = x\circ y$ where $(Q,\circ)$ is the $\Gamma$-loop of $(Q,\cdot)$.

We conclude this section by discussing the intersection of the varieties of Bruck loops and $\Gamma$-loops.
\begin{proposition}
A loop is both a Bruck loop and $\Gamma$-loop if and only if it is a commutative Moufang loop.
\label{moufang}
\end{proposition}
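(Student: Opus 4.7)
The plan is to handle the two implications separately. For the forward direction, the argument reduces to a classical fact about commutative Bol loops; the backward direction requires explicit verification of the four $\Gamma$-loop axioms, with ($\Gamma_4$) being the only nontrivial step.

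For ($\Rightarrow$), suppose $(Q,\cdot)$ is both a Bruck loop and a $\Gamma$-loop. Then $Q$ is commutative by ($\Gamma_1$) and satisfies the left Bol identity by the Bruck axioms. In any commutative loop $L_x = R_x$, and commutativity applied twice gives $x(yx) = (yx)x = (xy)x$; hence the left Bol identity $L_x L_y L_x = L_{x(yx)}$ rewrites as the right Bol identity $R_x R_y R_x = R_{(xy)x}$. A loop satisfying both left and right Bol is Moufang, so $Q$ is a commutative Moufang loop.

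For ($\Leftarrow$), let $(Q,\cdot)$ be a commutative Moufang loop. It is already a Bruck loop: Moufang implies left Bol, and commutativity together with the two-sided inverse property gives AIP. For the $\Gamma$-loop structure, ($\Gamma_1$) and ($\Gamma_2$) are immediate. For ($\Gamma_3$), the left inverse property $x^{-1}(xy)=y$ of Moufang loops gives $L_{x^{-1}} = L_x^{-1}$, so $L_x$ and $L_{x^{-1}}$ commute (their product is the identity). A direct consequence is $P_x = L_x L_{x^{-1}}^{-1} = L_x^2$, and hence $yP_x = x(xy) = x^2 y$ by left alternativity.

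With this simplification, ($\Gamma_4$) becomes the assertion $x^2(y^2(x^2 z)) = (x^2 y)^2 z$ for all $x,y,z\in Q$. Applying the left Bol identity with the substitutions $X=x^2$, $Y=y^2$, $Z=z$ yields $x^2(y^2(x^2z)) = (x^2(y^2 x^2))z$, and commutativity together with left alternativity collapses this to $x^4 y^2 z$. On the other side, the well-known commutative-Moufang identity $(ab)^2 = a^2 b^2$ (an immediate specialization of $a^2(bc) = (ab)(ac)$ with $c=b$) gives $(x^2 y)^2 = x^4 y^2$, so both sides agree.

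The main technical obstacle I expect is ($\Gamma_4$): unlike the other axioms, it cannot be read off directly from standard Moufang-loop properties. The crucial insight is that once ($\Gamma_3$) is in place, the operator $P_x$ degenerates to $L_x^2$, reducing ($\Gamma_4$) to a symmetric identity on squares, where the Bol identity and the squaring law of commutative Moufang loops close the argument.
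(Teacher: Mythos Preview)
Your proof is correct and follows the same route as the paper. For ($\Rightarrow$) the paper simply cites Robinson for the fact that commutative Bruck loops are Moufang, which is precisely the left-Bol-plus-commutativity-implies-right-Bol argument you spell out; for ($\Leftarrow$) the paper declares the direction ``clear'' without elaboration, so your explicit verification of ($\Gamma_4$) via the reduction $P_x = L_x^2$ is a welcome expansion rather than a genuine departure.
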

\begin{proof}
The ``if" direction is clear.  For the converse, commutative Bruck loops are commutative Moufang loops \cite{robinson2}.
\end{proof}
The following result quickly follows from the fact that Moufang loops are diassociative (\emph{i.e.} the subloop $\langle x,y \rangle$ is a group for all $x,y$) and the definitions of the operations.
\begin{proposition}
Let $(Q,\cdot)$ be a uniquely $2$-divisible commutative Moufang loop.  Then $(Q,\cdot)=(Q,\circ)=(Q,\oplus_{\cdot})$.
\label{samefrommoufang}
\end{proposition}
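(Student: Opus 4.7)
My plan is to establish the two equalities separately: first $(Q,\cdot)=(Q,\oplus_{\cdot})$ by a direct computation exploiting diassociativity, and then $(Q,\cdot)=(Q,\circ)$ by invoking Theorem \ref{core}(A).

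I would start by fixing $x,y\in Q$ and using the fact that a commutative Moufang loop is diassociative to reduce computations to the subloop $\langle x,y\rangle$, which is an abelian group (commutativity plus diassociativity). In this subgroup the equation $x^{-1}\cdot z=y^{2}x$ is solved by $z=x^{2}y^{2}$; since division in a loop is unique, this identifies $x^{-1}\backslash_{\cdot}(y^{2}x)=x^{2}y^{2}$ inside $(Q,\cdot)$ as well. Moreover, $(xy)^{2}=x^{2}y^{2}$ inside $\langle x,y\rangle$, and unique $2$-divisibility of $(Q,\cdot)$ pins down the square root in $Q$ as $(x^{2}y^{2})^{1/2}=xy$. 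Putting these together,
\[
x\oplus_{\cdot}y \;=\; (x^{-1}\backslash_{\cdot}(y^{2}x))^{1/2} \;=\; (x^{2}y^{2})^{1/2} \;=\; xy,
\]
which gives $(Q,\oplus_{\cdot})=(Q,\cdot)$.

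For the second equality I would appeal to Proposition \ref{moufang}, which says $(Q,\cdot)$ is simultaneously a $\Gamma$-loop, and then use Theorem \ref{core}(A): $\mathcal{G}(\mathcal{B}(Q,\cdot))=(Q,\cdot)$. The previous step already yields $\mathcal{B}(Q,\cdot)=(Q,\oplus_{\cdot})=(Q,\cdot)$, so $\mathcal{G}(Q,\cdot)=(Q,\circ)=(Q,\cdot)$, as desired.

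The one point requiring care is the identification $(x^{2}y^{2})^{1/2}=xy$: one must verify $(xy)^{2}=x^{2}y^{2}$ inside $\langle x,y\rangle$ (immediate from the abelian group structure granted by diassociativity) and then invoke uniqueness of square roots in the uniquely $2$-divisible loop $Q$. Everything else is a routine unpacking of definitions, which matches the paper's remark that the result ``quickly follows''.
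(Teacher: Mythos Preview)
Your computation of $(Q,\cdot)=(Q,\oplus_{\cdot})$ via diassociativity is exactly what the paper intends. For $(Q,\cdot)=(Q,\circ)$, however, the paper's one-line justification points to a direct computation from diassociativity and the definition of $\circ$, whereas you instead route through Theorem~\ref{core}(A). Your argument is logically sound, and arguably cleaner than unwinding $1(L_x\circ L_y)$ by hand, since it reuses the heavy lifting already done. The trade-off is a hypothesis mismatch: Theorem~\ref{core} is stated for the categories $\gammacat$ and $\bruckcat$ of loops of \emph{odd order}, while the present proposition is phrased for uniquely $2$-divisible commutative Moufang loops, which in principle could be infinite. So as written your appeal to Theorem~\ref{core}(A) covers only the finite (odd-order) case. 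In fairness, the construction of $\circ$ from a Bruck loop already presupposes that $\Mlt_{\lambda}(Q)$ is uniquely $2$-divisible, something the paper only establishes in the odd-order setting (Theorem~\ref{twistbruckloop}), so the proposition is effectively living in that setting anyway; but you should flag this rather than leave it implicit.
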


\begin{proposition}
Let $(Q,\cdot)$ be a $\Gamma$-loop of exponent $3$.  Then $(Q,\cdot)$ is a commutative Moufang loop.
\end{proposition}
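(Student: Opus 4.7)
The plan is to show that $(Q,\cdot)$ coincides with its associated Bruck loop $(Q,\oplus_{\cdot})$; once this is done, $(Q,\cdot)$ is simultaneously a $\Gamma$-loop and a Bruck loop, so Proposition \ref{moufang} identifies it as a commutative Moufang loop. Since exponent $3$ forces the squaring map to equal inversion, $(Q,\cdot)$ is uniquely $2$-divisible with $z^{1/2} = z^{-1}$. Theorem \ref{gammatobruck} then produces the Bruck loop $(Q,\oplus_{\cdot})$, which also has exponent $3$ because powers coincide.

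The one real calculation is to verify that $(Q,\oplus_{\cdot})$ is commutative. Setting $y = 1$ in the left Bol identity yields the left alternative law $x\oplus_{\cdot}(x\oplus_{\cdot} z)=x^2\oplus_{\cdot} z$. From AIP together with exponent $3$,
\[
(x\oplus_{\cdot} y)^2 \;=\; (x\oplus_{\cdot} y)^{-1} \;=\; x^{-1}\oplus_{\cdot} y^{-1} \;=\; x^2\oplus_{\cdot} y^2,
\]
while Proposition \ref{bolp1} gives $(x\oplus_{\cdot} y)^2 = x\oplus_{\cdot}(y^2\oplus_{\cdot} x)$. Using left alternative to rewrite $x^2\oplus_{\cdot} y^2 = x\oplus_{\cdot}(x\oplus_{\cdot} y^2)$ and then left-cancelling $x$ produces $x\oplus_{\cdot} y^2 = y^2\oplus_{\cdot} x$; since $y\mapsto y^2$ is a bijection, $(Q,\oplus_{\cdot})$ is commutative.

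By the classical fact that a commutative Bruck loop is a commutative Moufang loop (the implication cited in the proof of Proposition \ref{moufang}), $(Q,\oplus_{\cdot})$ is a commutative Moufang loop. Proposition \ref{samefrommoufang}, applied to the uniquely $2$-divisible commutative Moufang loop $(Q,\oplus_{\cdot})$, identifies the $\Gamma$-loop reconstructed from $(Q,\oplus_{\cdot})$ with $(Q,\oplus_{\cdot})$ itself. On the other hand, Theorem \ref{core}(A) --- whose proof invokes only Lemma \ref{unique1} and Proposition \ref{division} and therefore extends verbatim from the odd-order case to the uniquely $2$-divisible setting --- identifies this same reconstructed $\Gamma$-loop with the original $(Q,\cdot)$. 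Chaining the equalities gives $(Q,\cdot) = (Q,\oplus_{\cdot})$, which is a commutative Moufang loop. The only non-routine step is the commutativity calculation above; the rest is bookkeeping with the functors already constructed in the paper.
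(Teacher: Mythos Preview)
Your overall plan---show $(Q,\cdot)=(Q,\oplus_{\cdot})$ and then invoke Proposition~\ref{moufang}---is exactly the paper's strategy, and your commutativity calculation for $(Q,\oplus_{\cdot})$ is correct; it amounts to a clean re-derivation, in the exponent-$3$ case, of the result the paper simply cites from Robinson. The divergence, and the gap, is in the final step.

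Theorem~\ref{core}(A) is stated and proved only for $\Gamma$-loops of \emph{odd order}, while the proposition here carries no finiteness hypothesis. Your claim that the proof ``extends verbatim'' to the uniquely $2$-divisible setting is not justified by what is in the paper. Proposition~\ref{division}, which the proof of Theorem~\ref{core}(A) uses, presupposes that the $\Gamma$-loop $(Q,\circ)$ of the Bruck loop $(Q,\oplus_{\cdot})$ has already been constructed; that construction (Corollary~\ref{leftmultloop} via Theorem~\ref{twistbruckloop}) requires $\Mlt_{\lambda}(Q,\oplus_{\cdot})$ to be uniquely $2$-divisible, so that $[L_y,L_x]^{1/2}$ makes sense. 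The paper only secures this through odd order (a Bruck loop of odd order has $\Mlt_{\lambda}$ of odd order). Nothing proved in the paper gives square roots in $\Mlt_{\lambda}$ for an arbitrary uniquely $2$-divisible Bruck loop, so ``extends verbatim'' does not go through without additional argument.

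The paper avoids this entirely. After noting that $(Q,\oplus_{\cdot})$ is a commutative Moufang loop, it applies Lemma~\ref{unique1} together with diassociativity of Moufang loops to compute directly
\[
x \;=\; \bigl(x\oplus_{\cdot}(xy)^{-1/2}\bigr)/_{\oplus_{\cdot}}(xy)^{-1/2}
\;=\; \bigl(y^{-1}\oplus_{\cdot}(xy)^{1/2}\bigr)\oplus_{\cdot}(xy)^{1/2}
\;=\; y^{-1}\oplus_{\cdot}(xy),
\]
whence $xy = y\oplus_{\cdot} x = x\oplus_{\cdot} y$. This uses only unique $2$-divisibility of $Q$ itself and never touches $\Mlt_{\lambda}$. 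You can repair your argument by replacing the functorial bookkeeping with this two-line computation once you have established that $(Q,\oplus_{\cdot})$ is commutative Moufang.
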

\begin{proof}
The associated Bruck loop $(Q,\oplus_{\cdot})$ is a commutative Moufang loop \cite{robinson2}.  Moreover, recalling Proposition \ref{unique1}, $x\oplus_{\cdot}(xy)^{-1/2}=y^{-1}\oplus_{\cdot}(xy)^{1/2}$ holds for all $x,y\in Q$.  Hence, using diassociativity, we have
\[
x=(y^{-1}\oplus_{\cdot}(xy)^{1/2})\oplus_{\cdot}(xy)^{1/2}=y^{-1}\oplus_{\cdot}(xy).
\] 
Thus, $xy=y\oplus_{\cdot} x=x\oplus_{\cdot} y$, and therefore, $(Q,\cdot)=(Q,\oplus_{\cdot})$ is a commutative Moufang loop.
 \end{proof}

\section{$\Gamma$-loops of odd order}
\label{sec:structure}
In this section we will take notational advantage of Theorem \ref{core} and write simply $\oplus$ for the Bruck loop operation of a $\Gamma$-loop of odd order.

\begin{proposition}
Let $(Q,\cdot)$ be a $\Gamma$-loop with $|Q|=p^{2}$ for $p$ prime.  Then $(Q,\cdot)$ is an abelian group.
\end{proposition}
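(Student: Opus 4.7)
The plan is to split on $p$ and invoke the structure results for finite $\Gamma$-loops of odd order (Lagrange's theorem and the nontriviality of the center of $\Gamma$-$p$-loops) that are established elsewhere in Section 6.

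First I handle $p=2$ separately: here $|Q|=4$, and since every loop of order at most $4$ is a group and $Q$ is commutative by $(\Gamma_{1})$, $Q$ is immediately an abelian group.

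For $p$ odd, $Q$ is a finite $\Gamma$-$p$-loop, so nontriviality of the center forces $Z(Q)\neq 1$, and Lagrange leaves only $|Z(Q)|\in\{p,p^{2}\}$. The case $|Z(Q)|=p^{2}$ yields $Q=Z(Q)$, which is associative and commutative, hence an abelian group. In the remaining case $|Z(Q)|=p$, the quotient $Q/Z(Q)$ is a $\Gamma$-loop of prime order $p$; power-associativity (Theorem \ref{powerassociative}) shows that any nonidentity element of this quotient generates a cyclic group of order $p$, so $Q/Z(Q)$ is a cyclic group. Picking $g\in Q$ whose image generates $Q/Z(Q)$, every element of $Q$ admits a unique representation $g^{i}z$ with $0\le i<p$ and $z\in Z(Q)$.

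The crux is then a direct calculation: shuffling central elements via the nucleus axioms (since $Z(Q)\subseteq N(Q)$ in a commutative loop), commutativity, and power-associativity, one obtains
\begin{equation*}
(g^{i}z_{1})(g^{j}z_{2})=g^{i+j}(z_{1}z_{2})
\end{equation*}
for all $i,j$ and $z_{1},z_{2}\in Z(Q)$. Associativity on $Q$ follows from this formula and the fact that $Z(Q)$ is itself an abelian group, and commutativity is automatic; hence $Q$ is an abelian group. I expect the main bookkeeping step to be the final calculation (rearranging the $z_{i}$'s past the $g^{j}$'s using the left, middle, and right nucleus properties in turn), with everything else reducing cleanly to the Section 6 machinery.
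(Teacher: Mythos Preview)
Your argument is correct, but it takes a different route from the paper. The paper handles $p$ odd in one line via the categorical isomorphism: the associated Bruck loop $(Q,\oplus)$ has order $p^{2}$, and a result of Burn says Bruck loops of order $p^{2}$ are abelian groups; since the $\Gamma$-loop of an abelian group is that same group, Theorem~\ref{core} forces $(Q,\cdot)=(Q,\oplus)$.

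You instead invoke the later Section~6 machinery (Lagrange and central nilpotence for $\Gamma$-$p$-loops) and then run the classical ``center-by-cyclic implies abelian'' argument internally in $Q$. This is legitimate---there is no circularity, since neither Theorem~\ref{lag_cauchy} nor the central-nilpotence theorem depends on the $p^{2}$ proposition---but it leans on Glauberman's central nilpotence for Bruck $p$-loops, which is considerably heavier than Burn's $p^{2}$ result. What your approach buys is a self-contained endgame that does not require the reader to know Burn's theorem and that exhibits the nucleus manipulations explicitly; what the paper's approach buys is brevity and an immediate, clean illustration of how Theorem~\ref{core} transports structural facts from Bruck loops. Note also that as written your proof creates forward references within Section~6, so if adopted, the proposition would need to be relocated after the Lagrange and center results.
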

\begin{proof}
Loops of order $4$ are abelian groups \cite{hala}, so assume $p>2$.  For odd primes, Bruck loops of order $p^{2}$ are abelian groups \cite{burn}.  Thus since $(Q,\oplus)$ is an abelian group, so is its $\Gamma$-loop, which, by Theorem \ref{core}, coincides with $(Q,\cdot)$.
 \end{proof}

\begin{lemma}
Let $(Q,\cdot)$ be a $\Gamma$-loop of odd order and let $(Q,\oplus)$ be its Bruck loop.  Then the derived subloops of $(Q,\cdot)$ and $(Q,\oplus)$ coincide. In particular, the derived series of $(Q,\cdot)$ and $(Q,\oplus)$ coincide.
\label{derive}
\end{lemma}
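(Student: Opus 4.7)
The plan is to reduce the statement to a correspondence between normal subloops with abelian group quotient, exploiting the categorical isomorphism of Theorem \ref{core}. Recall that the derived subloop of a loop $L$ is the intersection of all normal subloops $N$ such that $L/N$ is an abelian group; for both $\Gamma$-loops (by commutativity) and Bruck loops (by AIP), a group quotient is automatically abelian.

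The first step is to show that the normal subloops of $(Q,\cdot)$ and $(Q,\oplus)$ coincide as subsets of $Q$. The operation $\oplus$ is given explicitly in terms of $\cdot$ by $x \oplus y = (x^{-1}\backslash_\cdot (y^2 x))^{1/2}$, while the remark following Theorem \ref{core} shows, conversely, that $x \cdot y$ is the unique $z \in Q$ satisfying $x \oplus z^{-1/2} = y^{-1} \oplus z^{1/2}$ in the Bruck loop $(Q,\oplus)$. Since powers coincide in the two structures (Theorem \ref{gammatobruck}) and odd-order loops are uniquely $2$-divisible, every subloop of either structure is closed under all operations appearing in these formulas (loop operations, inverses, square roots), and hence closed under the other binary operation. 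The same mutual expressibility shows that a map is a $\Gamma$-loop homomorphism if and only if it is a Bruck loop homomorphism of the corresponding structures, so the normal subloops---being kernels of homomorphisms---coincide.

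The second step is to show that for each such normal subloop $N$, the quotient $(Q/N, \cdot)$ is an abelian group if and only if $(Q/N, \oplus)$ is an abelian group. If $(Q/N, \cdot)$ is an abelian group, then it is a commutative Moufang loop, and Proposition \ref{samefrommoufang} gives $(Q/N, \cdot) = (Q/N, \oplus)$, so the latter is also an abelian group; the converse is symmetric, using Theorem \ref{core}(B). Combining the two steps, the families of normal subloops with abelian group quotient are identical in $(Q,\cdot)$ and $(Q,\oplus)$, and intersecting them yields the same derived subloop in both structures. The derived series statement then follows by induction: at each stage $H = Q^{(i)}$, the restriction of $\oplus$ to $H$ coincides with the Bruck loop operation derived from $(H,\cdot)$, since both are computed by the same formula in $\cdot$, so the preceding argument applies at each level.

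I expect the main obstacle to be the first step: carefully justifying the correspondence of normal subloops via the mutual expressibility of $\cdot$ and $\oplus$. The crucial enabling fact is that powers---and in particular square roots---coincide in $(Q,\cdot)$ and $(Q,\oplus)$, which is what allows closure under one operation to imply closure under the other and ultimately produces a bijection on subloops and morphisms.
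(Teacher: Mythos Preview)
Your proposal is correct and follows essentially the same approach as the paper's proof. The paper simply invokes the categorical isomorphism (Theorem \ref{core}) to conclude that normal subloops coincide and that abelian-group quotients correspond, whereas you unpack this by spelling out the mutual expressibility of $\cdot$ and $\oplus$ and by citing Proposition \ref{samefrommoufang} for the abelian-quotient step; the underlying logic is identical.
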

\begin{proof}
By the categorical isomorphism (Theorem \ref{core}), any normal subloop of $(Q,\oplus)$ is a normal subloop of $(Q,\cdot)$ and vice versa.  If $S$ is the derived subloop of $(Q,\oplus)$, then $S$ is a normal subloop of $(Q,\cdot)$ such that $(Q/S,\cdot)$ is an abelian group.  If $M$ were a smaller normal subloop of $(Q,\cdot)$ with this property, then it would have the same property for $(Q,\oplus)$, a contradiction.  The converse is proven similarly.
 \end{proof}

\begin{theorem}[Odd Order Theorem]
$\Gamma$-loops of odd order are solvable
\label{oddorder}
\end{theorem}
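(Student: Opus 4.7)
The plan is to reduce the statement directly to Glauberman's Odd Order Theorem for Bruck loops via the categorical isomorphism established in Theorem \ref{core}, using Lemma \ref{derive} as the bridge.

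First I would let $(Q,\cdot)$ be a $\Gamma$-loop of odd order and pass to its associated Bruck loop $(Q,\oplus) = \mathcal{B}(Q,\cdot)$, which also has odd order since the underlying set is the same. By Glauberman's Odd Order Theorem for Bruck loops \cite{glauberman1}, $(Q,\oplus)$ is solvable, so its derived series
\[
(Q,\oplus) = Q^{(0)} \trianglerighteq Q^{(1)} \trianglerighteq Q^{(2)} \trianglerighteq \cdots \trianglerighteq Q^{(n)} = \{1\}
\]
terminates in finitely many steps with abelian factors.

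Then I would invoke Lemma \ref{derive} iteratively: it tells us that the derived subloop of $(Q,\cdot)$ equals the derived subloop of $(Q,\oplus)$, and applying the lemma to each successive quotient (which inherits both a $\Gamma$-loop and a Bruck loop structure under the categorical isomorphism, since normal subloops and quotients correspond under $\mathcal{B}$ and $\mathcal{G}$) the entire derived series of $(Q,\cdot)$ coincides with that of $(Q,\oplus)$. Hence the derived series of $(Q,\cdot)$ also terminates at $\{1\}$, which by definition means $(Q,\cdot)$ is solvable.

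There is really no obstacle here of a computational nature; the only subtlety to mention explicitly is that Lemma \ref{derive} applies to the successive derived quotients, which requires that the categorical isomorphism $\mathcal{B}\leftrightarrow\mathcal{G}$ respects normal subloops and quotients of odd order. This is built into the fact that $\mathcal{B}$ and $\mathcal{G}$ are mutually inverse functors in Theorem \ref{core}, together with the observation that normality is an internal property detectable from the multiplication group (which is the same on both sides once one recalls the relationship between $L_Q$ and the two operations). With that in hand, the proof reduces to one sentence: \emph{the derived series of $(Q,\cdot)$ coincides with that of the Bruck loop $(Q,\oplus)$, which terminates by Glauberman's theorem}.
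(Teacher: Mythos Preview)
Your proposal is correct and follows essentially the same route as the paper: pass to the associated Bruck loop, invoke Glauberman's odd order theorem, and conclude via Lemma~\ref{derive} that the derived series coincide. Two minor remarks: the paper cites \cite{glauberman2} rather than \cite{glauberman1} for the solvability of odd-order Bruck loops, and your extended discussion of iterating Lemma~\ref{derive} through successive quotients is unnecessary since the lemma already asserts outright that the full derived series of $(Q,\cdot)$ and $(Q,\oplus)$ coincide.
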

\begin{proof}
Let $(Q,\cdot)$ be a $\Gamma$-loop of odd order and let $(Q,\oplus)$ be its Bruck loop.
Then $(Q,\oplus)$ is solvable (\cite{glauberman2}, Theorem 14(b), p. 412), and so the desired result follows from Lemma \ref{derive}.
\end{proof}

\begin{theorem}[Lagrange and Cauchy Theorems]
Let $(Q,\cdot)$ be a $\Gamma$-loop of odd order.  Then:
\begin{itemize}
\item [(L)] If $A\leq B\leq Q$ then $|A|$ divides $|B|$.
\item [(C)] If an odd prime $p$ divides $|Q|$, then $Q$ has an element an order of $p$.
\end{itemize}
\label{lag_cauchy}
\end{theorem}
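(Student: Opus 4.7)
The overall strategy is to transfer the corresponding results for Bruck loops of odd order, established by Glauberman, through the categorical isomorphism of Theorem~\ref{core}. The key preliminary fact required for both parts is that the subloops of $(Q,\cdot)$ as a $\Gamma$-loop coincide, as subsets, with the subloops of $(Q,\oplus)$ as a Bruck loop, and moreover that each such subloop is itself of odd order.

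To establish the subloop correspondence, I would first argue that any subloop $A \leq (Q,\cdot)$ of a $\Gamma$-loop of odd order is itself a $\Gamma$-loop of odd order. The $\Gamma$-axioms $(\Gamma_1)$--$(\Gamma_4)$ are universally quantified identities, so they are inherited by $A$. By Theorem~\ref{powerassociative}, every element of $A$ generates a cyclic group, so its order in $A$ equals its order in $Q$ and thus is odd. Invoking the power-associative part of Theorem~2.1, $A$ is uniquely $2$-divisible; since $A$ is commutative, the first part of Theorem~2.1 then yields that $|A|$ is odd. In particular, squaring is a bijection on $A$, so $A$ is closed under the square roots appearing in $x \oplus_{\cdot} y = (x^{-1}\backslash_{\cdot}(y^2 \cdot x))^{1/2}$; hence $A$ is also a subloop of $(Q,\oplus)$ and the Bruck operation $\oplus_A$ constructed from $(A,\cdot)$ via Theorem~\ref{gammatobruck} agrees with the restriction $\oplus|_A$. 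The reverse direction (Bruck subloop of odd order gives a $\Gamma$-subloop) is symmetric, using Proposition~\ref{division} and the categorical equivalence. The main obstacle is precisely this verification: one must be careful that the odd-order hypothesis is what makes square roots stay inside subloops, so that the defining formula for $\oplus_{\cdot}$ really does restrict.

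Given the correspondence, both parts are immediate. For \textnormal{(L)}, if $A \leq B \leq Q$ in $\gammacat$, then $A \leq B$ in $\bruckcat$ via $\mathcal{B}$, and Glauberman's Lagrange theorem for Bruck loops of odd order \cite{glauberman1,glauberman2} gives $|A|$ divides $|B|$. For \textnormal{(C)}, suppose an odd prime $p$ divides $|Q|$. By Glauberman's Cauchy theorem applied to the Bruck loop $(Q,\oplus)$, there exists $x \in Q$ of order $p$ in $(Q,\oplus)$. Since powers in $(Q,\cdot)$ and $(Q,\oplus_{\cdot})$ coincide by Theorem~\ref{gammatobruck}, the element $x$ also has order $p$ in $(Q,\cdot)$.
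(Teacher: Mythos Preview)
Your proposal is correct and follows essentially the same approach as the paper: transfer subloops $A\leq B$ to subloops of $(Q,\oplus)$, invoke Glauberman's Lagrange and Cauchy theorems for Bruck loops of odd order, and use that powers coincide (Theorem~\ref{gammatobruck}) for the Cauchy part. The only difference is that you spell out in detail why a $\Gamma$-subloop of odd order is closed under square roots and hence is a subloop of $(Q,\oplus)$, whereas the paper simply asserts this correspondence in one line.
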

\begin{proof}
Both subloops $A$ and $B$ give subloops $(A,\oplus)$ and $(B,\oplus)$ of $(Q,\oplus)$.  The result follows from (\cite{glauberman1}, Corollary 4, p. 395).  Similarly, if an odd prime $p$ divides $|Q|$, then $(Q,\oplus)$ has an element of order $p$  (\cite{glauberman1}, Corollary 1, p. 394).  Hence, $Q$ has an element of order $p$.
\end{proof}

\begin{theorem}
Let $Q$ be a $\Gamma$-loop of odd order and let $p$ be an odd prime.  Then $|Q|$ is a power of $p$ if and only if every element of $Q$ has order a power of $p$.
\label{powerofp}
\end{theorem}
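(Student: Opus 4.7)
The plan is to deduce this theorem directly from the Lagrange and Cauchy theorems already established as Theorem \ref{lag_cauchy}, so that no further appeal to the Bruck loop side is needed; both implications are elementary once those two tools are in hand.

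For the forward direction, I assume $|Q| = p^n$ for some $n \geq 0$. Given any $x \in Q$, the cyclic subloop $\langle x \rangle$ is a subloop of $Q$ (here power-associativity of $\Gamma$-loops from Theorem \ref{powerassociative} guarantees $\langle x \rangle$ is a genuine group, so its order equals the order of $x$). By the Lagrange Theorem (Theorem \ref{lag_cauchy}(L)), the order of $x$ divides $|Q| = p^n$, and hence is a power of $p$.

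For the reverse direction, I argue by contradiction. Suppose every element of $Q$ has order a power of $p$, but $|Q|$ is not a power of $p$. Since $|Q|$ is odd, any prime divisor of $|Q|$ is an odd prime; pick an odd prime $q \neq p$ with $q \mid |Q|$. By the Cauchy Theorem (Theorem \ref{lag_cauchy}(C)) applied to $q$, there exists an element $x \in Q$ of order $q$. But $q$ is not a power of $p$, contradicting the hypothesis.

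I do not expect any real obstacle here: the result is essentially the standard Lagrange/Cauchy deduction of the characterization of $p$-groups, transferred verbatim to the $\Gamma$-loop setting. The only minor point worth flagging is that the notion of ``order of an element'' is well defined thanks to power-associativity (Theorem \ref{powerassociative}), which justifies talking about orders of elements in a $\Gamma$-loop at all. Once that is observed, both implications are one-line arguments.
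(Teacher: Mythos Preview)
Your proof is correct and follows essentially the same approach as the paper's own proof: the forward direction uses Lagrange (Theorem~\ref{lag_cauchy}(L)) and the reverse direction uses Cauchy (Theorem~\ref{lag_cauchy}(C)) applied to an odd prime $q\neq p$ dividing $|Q|$. Your added remark about power-associativity (Theorem~\ref{powerassociative}) making element orders well defined is a helpful clarification the paper leaves implicit.
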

\begin{remark}
Note that this is false for $p=2$ by Example \ref{smallest}.
\end{remark}
\begin{proof}
If $|Q|$ is a power of $p$, then by Theorem \ref{lag_cauchy}(L) every element has order a power of $p$.  On the other hand, if $|Q|$ is divisible by an odd prime $q$, then by Theorem \ref{lag_cauchy}(C), $Q$ contains an element of order $q$.  Therefore, if every element is order $p$, $|Q|$ must have order a power of $p$.
 \end{proof}

Thus, in the odd order case, we can define $p$-subloops of $\Gamma$-loops.  Moreover, we can now show the existence of Hall $\pi$-subloops and Sylow $p$-subloops.

\begin{theorem}[Sylow subloops]
$\Gamma$-loops of odd order have Sylow $p$-subloops.
\label{sylow}
\end{theorem}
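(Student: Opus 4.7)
The natural strategy is to push Glauberman's existence theorem for Sylow subloops of Bruck loops of odd order through the categorical isomorphism of Theorem~\ref{core}. Let $(Q,\cdot)$ be a $\Gamma$-loop of odd order and let $(Q,\oplus)=\mathcal{B}(Q,\cdot)$ be its associated Bruck loop, using the notational convention of this section. Glauberman (see \cite{glauberman1,glauberman2}) proved that for each prime $p$ dividing $|Q|$, the Bruck loop $(Q,\oplus)$ possesses a Sylow $p$-subloop $(P,\oplus)\le(Q,\oplus)$ whose order is the largest power of $p$ dividing $|Q|$.

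The next step is to transfer this subloop across the isomorphism. I would verify that $P$, viewed as a subset of $Q$, is closed under the $\Gamma$-loop operation $\cdot$. Proposition~\ref{division} gives the formula $b/_{\cdot}a=(a^{-1}b^{1/2})/_{\oplus}b^{-1/2}$, in which the right-hand side uses only the Bruck operations and square roots. Since powers in $(Q,\cdot)$ coincide with powers in $(Q,\oplus)$ by Theorem~\ref{gammatobruck}, and $P$ has odd order, $P$ is closed under square roots and hence under $/_{\cdot}$; combined with commutativity and AIP, this yields closure under $\cdot$ itself. Hence $(P,\cdot)\le(Q,\cdot)$ is a $\Gamma$-subloop of order equal to the largest $p$-power dividing $|Q|$.

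Finally, Theorem~\ref{powerofp} upgrades this to the statement that $(P,\cdot)$ is a $p$-subloop of $(Q,\cdot)$, and since $|P|$ is the maximal $p$-power dividing $|Q|$, it is by definition a Sylow $p$-subloop. The main obstacle, modest but not fully automatic, is the subloop-transfer step: the categorical isomorphism is stated at the object/morphism level, and one must verify that a Bruck-substructure of $(Q,\oplus)$ yields a $\Gamma$-substructure of $(Q,\cdot)$ on the same underlying set. Once Proposition~\ref{division} has been invoked to show that the $\Gamma$-loop division is intrinsically computable from the Bruck operation together with square roots, the remainder of the argument is essentially a corollary of Glauberman's theorem.
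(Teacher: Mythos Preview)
Your proof is correct and follows the same strategy as the paper: invoke Glauberman's Sylow theorem for the associated Bruck loop $(Q,\oplus)$ and transfer the resulting subloop across the categorical isomorphism of Theorem~\ref{core}. The paper's version is terser---it simply cites Theorem~\ref{core} for the subloop transfer---whereas you spell that step out explicitly via Proposition~\ref{division}; one small notational slip is that in your cited formula the product $a^{-1}b^{1/2}$ on the right-hand side should read $a^{-1}\oplus b^{1/2}$ under your conventions, since (as you yourself note) that side is meant to involve only Bruck operations.
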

\begin{proof}
Let $(Q,\cdot)$ be a $\Gamma$-loop of odd order and $(Q,\oplus)$ its Bruck loop.  Then $(Q,\oplus)$ has a Sylow $p$-subloop (\cite{glauberman1}, Corollary 3, p. 394), say $(P,\oplus)$.  But then $(P,\circ)$ is a Sylow $p$-subloop of $(Q,\cdot)$ by Theorem \ref{core}.
\end{proof}

\begin{theorem}[Hall subloops]
$\Gamma$-loops of odd order have Hall $\pi$-subloops.
\label{hall}
\end{theorem}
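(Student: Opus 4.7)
The plan is to mimic exactly the argument used to prove Theorem \ref{sylow}, transferring the problem across the categorical isomorphism of Theorem \ref{core}. Specifically, I would start with a $\Gamma$-loop $(Q,\cdot)$ of odd order and a set of odd primes $\pi$, form the associated Bruck loop $(Q,\oplus) = \mathcal{B}(Q,\cdot)$, and invoke Glauberman's Hall subloop theorem for Bruck loops of odd order (which is Corollary~3 on p.~394 of \cite{glauberman1}, the same reference already used for Sylow subloops) to obtain a Hall $\pi$-subloop $(H,\oplus) \leq (Q,\oplus)$.

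Next, I would appeal to Theorem \ref{core} to push $(H,\oplus)$ back through $\mathcal{G}$: since $\mathcal{G}$ sends subloops to subloops on the same underlying set, $(H,\circ)$ is a subloop of $(Q,\circ) = \mathcal{G}(Q,\oplus) = (Q,\cdot)$. The underlying sets agree, so $|H|$ is unchanged under the transfer, and in particular $|H|$ is a $\pi$-number and $[Q:H]$ is a $\pi'$-number. Moreover, by Theorem \ref{powerofp} together with the fact that powers coincide in a $\Gamma$-loop and its Bruck loop (Theorem \ref{gammatobruck}), the property of being a $\pi$-subloop is invariant under the functors, so $(H,\cdot)$ is genuinely a Hall $\pi$-subloop of $(Q,\cdot)$.

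There is essentially no obstacle beyond checking that the notion of ``Hall $\pi$-subloop'' transfers cleanly across $\mathcal{G}$ and $\mathcal{B}$; this reduces to the two facts that (a) the functors preserve the underlying set and hence cardinalities, and (b) by Theorem \ref{powerofp}, $p$-subloops on the $\Gamma$-loop side correspond to $p$-subloops on the Bruck loop side. Both are already in hand, so the proof will be a one-paragraph application of Glauberman's theorem followed by a one-line invocation of Theorem \ref{core}, in complete parallel with the proof of Theorem \ref{sylow}.
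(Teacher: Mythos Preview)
Your proposal is correct and follows exactly the paper's approach: pass to the associated Bruck loop, invoke Glauberman's Hall $\pi$-subloop theorem, and transfer back via Theorem~\ref{core}. The only slip is the reference---the paper cites \cite{glauberman1}, Theorem~8, p.~392 for Hall subloops (Corollary~3, p.~394 is the Sylow result), not the other way around.
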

\begin{proof}
Let $(Q,\cdot)$ be a $\Gamma$-loop of odd order and $(Q,\oplus)$ its Bruck loop.  Then $(Q,\oplus)$ has a Hall $\pi$- subloop (\cite{glauberman1}, Theorem 8, p. 392), say $(H,\oplus)$.  But then $(H,\circ)$ is a Hall $\pi$-subloop of $(Q,\cdot)$ by Theorem \ref{core}.
\end{proof}

Recall the \emph{center} of a loop $Q$ is defined as
\begin{equation*}
Z(Q)=\{a\in Q\mid xa=ax, \quad ax\cdot y=a\cdot xy, \quad xa\cdot y=x\cdot ay \quad\text{and}\quad  xy\cdot a=x\cdot ya \quad \forall x,y\in Q\}.
\end{equation*}
\begin{theorem}
Let $(Q,\cdot)$ be a Bruck loop of odd order.  Then $Z(Q,\cdot)=Z(Q,\circ)$.
\end{theorem}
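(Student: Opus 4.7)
The plan is to prove both inclusions $Z(Q,\cdot) \subseteq Z(Q,\circ)$ and $Z(Q,\circ) \subseteq Z(Q,\cdot)$ by exploiting the explicit formula defining $\circ$ and the identification $(Q,\cdot) = (Q, \oplus_\circ)$ from Theorem \ref{core}(B).

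For the forward inclusion, I would take $a \in Z(Q,\cdot)$ and first observe that the associator-with-$a$ axioms together with commutativity force $L_a L_x = L_x L_a$ for every $x \in Q$, so $L_a$ is central in $\Mlt_\lambda(Q,\cdot)$. Thus $[L_x, L_a] = 1$ and $L_a \circ L_x = L_a L_x$. A brief group-theoretic calculation then yields
\[
(L_a \circ L_x) \circ L_y = L_a L_x L_y [L_y, L_x]^{1/2} = L_a \circ (L_x \circ L_y),
\]
using that $[L_y, L_a L_x] = [L_y, L_x]$ since $L_a$ is central. Via the isomorphism $x \mapsto L_x : (Q,\circ) \to (L_Q, \circ)$, this translates into $(a \circ x) \circ y = a \circ (x \circ y)$. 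Combined with commutativity of $\circ$, this gives all three associator conditions and $a \in Z(Q,\circ)$.

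For the reverse inclusion, by Theorem \ref{core}(B), $(Q,\cdot) = (Q, \oplus_\circ)$, so it is enough to prove the following companion statement: for any $\Gamma$-loop $(Q',\circ)$ of odd order with associated Bruck loop $(Q', \oplus_\circ)$, one has $Z(Q',\circ) \subseteq Z(Q', \oplus_\circ)$. Given $a \in Z(Q',\circ)$, I would expand $a \oplus_\circ x = (a^{-1} \backslash_\circ (x^2 \circ a))^{1/2}$ and $x \oplus_\circ a$ using the formula for $\oplus_\circ$: centrality of $a$ in $\circ$ makes the hidden equation $a^{-1} \circ z = x^2 \circ a = a \circ x^2$ solvable by free reassociation, yielding $z = a^2 \circ x^2$ and hence $a \oplus_\circ x = x \oplus_\circ a = (a^2 \circ x^2)^{1/2}$. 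The three associator conditions for $a$ in $\oplus_\circ$ follow from analogous computations: each side of a given associator identity reduces, using centrality of $a$ (and of $a^2$, which inherits centrality from $a$), to the same $\circ$-expression.

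The main obstacle will be the bookkeeping in the second inclusion: each $\oplus_\circ$-expression hides a left division in $\circ$ and a square root, so verifying the associator identities requires iterated use of centrality of $a$. A conceptually cleaner approach would be to identify both $Z(Q,\cdot)$ and $Z(Q,\circ)$ with $\{a \in Q : L_a \in Z(\Mlt_\lambda(Q,\cdot))\}$, reducing the problem to a group-theoretic statement, but this needs a separate Glauberman-type characterization of the center for each loop structure.
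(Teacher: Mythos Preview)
Your plan is correct and essentially matches the paper's proof. Both directions proceed the same way: for $Z(Q,\cdot)\subseteq Z(Q,\circ)$ you (and the paper) reduce to showing $[L_y,L_aL_x]=[L_y,L_x]$ and hence $(a\circ x)\circ y=a\circ(x\circ y)$ via the isomorphism $x\mapsto L_x$; for $Z(Q,\circ)\subseteq Z(Q,\cdot)$ you (and the paper) use $(Q,\cdot)=(Q,\oplus_\circ)$ and unwind the formula $a\oplus_\circ x=(a^{-1}\backslash_\circ(x^2\circ a))^{1/2}$ using centrality of $a$ in $(Q,\circ)$.

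One small difference worth noting: to obtain $a\circ x=ax$ in the forward direction, the paper invokes Lemma~\ref{unique1}, whereas you get it more directly from $L_a\circ L_x=L_aL_x=L_{ax}$. Your route is a bit cleaner here. Conversely, the paper streamlines the reverse direction by observing that in a Bruck loop $xa\cdot y=x\cdot ay$ already implies $a\cdot xy=ax\cdot y$, so only commutativity and one associator identity need to be checked explicitly; you may want to use this to cut down the ``bookkeeping'' you anticipate.
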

\begin{proof}
Let $a\in Z(Q,\cdot)$ and recall $a(a\circ x)^{-1/2}=x^{-1}(a\circ x)^{1/2}$ from Lemma \ref{unique1} holds for any $x\in Q$.  Then
\begin{equation*}
x\cdot a(a\circ x)^{-1/2}=(a\circ x)^{1/2}\Leftrightarrow xa\cdot (a\circ x)^{-1/2}=(a\circ x)^{1/2} \Leftrightarrow xa=a\circ x.
\end{equation*}
Moreover, for any $x,y,z\in Q$, 
\begin{equation*}
z[L_{y},L_{xa}]=zL_{y}^{-1}L_{xa}^{-1}L_{y}L_{xa}=xa \cdot y((xa)^{-1}\cdot y^{-1}z)=x\cdot y( x^{-1}\cdot y^{-1}z)=z[L_{y},L_{x}].
\end{equation*}
Thus, for all $x,y \in Q$, noting $L_{ax}=L_{a}L_{x}$,
\begin{alignat*}{3}
(a\circ x)\circ y
&=ax\circ y
&&=L_{ax}\circ L_{y}
&&=L_{a}L_{x}\circ L_{y} \\
&=L_{a}L_{x}L_{y}[L_{y},L_{a}L_{x}]^{1/2}
&&=L_{a}L_{x}L_{y}[L_{y},L_{x}]^{1/2}
&&=L_{a}L_{x\circ y} \\
&=L_{a(x\circ y)}
&&=L_{a\circ (x\circ y)}
&&=a\circ (x\circ y)\,.
\end{alignat*}
Therefore $a\in Z(Q,\circ)$ by commutativity of $(Q,\circ)$.
Similarly, let $a\in Z(Q,\circ)$ and let $(Q,\oplus)$ be its corresponding Bruck loop. It is enough to show that $ax=xa$ and $xa\cdot y=x\cdot ay$ since in a Bruck loop, $xa\cdot y=x\cdot ay \Leftrightarrow a\cdot xy=ax\cdot y$.  We compute
\begin{equation*}
ay=a\oplus y=(a^{-1}\backslash_{\circ}(y^{2}\circ a))^{1/2} =(a^{2}\circ y^{2})^{1/2}=a\circ y=y\circ a=ya.
\end{equation*}
Moreover,
\begin{align*}
xa\cdot y&=xa \oplus y=((xa)^{-1}\backslash_{\circ}(y^{2}\circ (xa)))^{1/2}= ((x\circ a)^{-1}\backslash_{\circ}(y^{2}\circ (x\circ a)))^{1/2}\\
&=(x^{-1}\backslash_{\circ}((a\circ y)^{2} \circ x))^{1/2}=x\oplus (ay)=x\cdot ay.
\end{align*}
Therefore $a\in Z(Q,\cdot)$.
 \end{proof}

Define $Z_{0}(Q)=1$ and $Z_{n+1}(Q), n\geq 0$ as the preimage of $Z(Q/Z_{n}(Q))$ under the natural projection.  This defines the \emph{upper central series}
\begin{equation*}
1\leq Z_{1}(Q)\leq Z_{2}(Q)\leq\ldots\leq Z_{n}(Q)\leq \ldots \leq Q
\end{equation*}
of $Q$.  If for some $n$ we have $Z_{n-1}(Q)<Z_{n}(Q)=Q$, then $Q$ is said to be \emph{(centrally) nilpotent of class n}.
\begin{theorem}
Let $p$ be an odd prime.  Then finite $\Gamma$ $p$-loops are centrally nilpotent.
\end{theorem}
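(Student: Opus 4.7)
The plan is to transfer Glauberman's theorem on central nilpotency of finite Bruck $p$-loops (for $p$ odd) to $\Gamma$-$p$-loops via the categorical isomorphism of Theorem \ref{core} together with the preceding theorem on the equality of centers. I would proceed in three stages.

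First, let $(Q,\cdot)$ be a finite $\Gamma$-$p$-loop and let $(Q,\oplus)$ be its associated Bruck loop. By Theorem \ref{gammatobruck} powers coincide in $(Q,\cdot)$ and $(Q,\oplus)$, so every element of $(Q,\oplus)$ has order a power of $p$; in particular $|Q|$ is a power of $p$, so $(Q,\oplus)$ is a finite Bruck $p$-loop. By Glauberman \cite{glauberman1}, its upper central series $1 \leq Z_1(Q,\oplus) \leq Z_2(Q,\oplus) \leq \dots$ terminates at $Q$ in finitely many steps.

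Second, I would show by induction on $n$ that $Z_n(Q,\cdot) = Z_n(Q,\oplus)$ as subsets of $Q$. The base case $n=0$ is immediate. For the inductive step, set $N := Z_n(Q,\cdot) = Z_n(Q,\oplus)$. By Theorem \ref{core} the normal subloop structures coincide (as already exploited in Lemma \ref{derive}), so $N$ is normal in both $(Q,\cdot)$ and $(Q,\oplus)$, and the two quotients $(Q/N,\cdot)$ and $(Q/N,\oplus)$ are loops on the same underlying set. Because the defining formula $x\oplus y = (x^{-1}\backslash_\cdot (y^2 x))^{1/2}$ only uses loop operations and is preserved by the natural projection, the Bruck loop of $(Q/N,\cdot)$ is exactly $(Q/N,\oplus)$; that is, $\mathcal{B}$ and $\mathcal{G}$ commute with taking quotients by (centrally) normal subloops. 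Applying the preceding theorem (``centers agree under the categorical correspondence'') to the quotient yields $Z(Q/N,\cdot) = Z(Q/N,\oplus)$, and pulling back under the projection $Q \to Q/N$ gives $Z_{n+1}(Q,\cdot) = Z_{n+1}(Q,\oplus)$, completing the induction.

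Finally, since $Z_n(Q,\oplus) = Q$ for some $n$ by the first stage, the equality just proved gives $Z_n(Q,\cdot) = Q$, so $(Q,\cdot)$ is centrally nilpotent of class at most $n$. The main obstacle is the middle stage: one must verify that $\mathcal{B}$ and $\mathcal{G}$ are compatible with quotients, so that the preceding theorem can be applied to $Q/Z_n$ rather than only to $Q$ itself. This is essentially functoriality together with the explicit formulas, but is the only nontrivial verification; everything else is a direct appeal to Glauberman's result and to Theorem \ref{core}.
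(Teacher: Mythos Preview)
Your proposal is correct and follows essentially the same approach as the paper: use Glauberman's result that finite Bruck $p$-loops (for $p$ odd) are centrally nilpotent, then prove $Z_n(Q,\cdot)=Z_n(Q,\oplus)$ for all $n$ by induction via the equality of centers from the preceding theorem. The paper's proof compresses the entire induction into a single sentence; your write-up is more explicit, and in particular you correctly flag the one nontrivial point the paper leaves implicit, namely that the functors $\mathcal{B}$ and $\mathcal{G}$ commute with the quotient by $Z_n$ so that the center-equality theorem can be applied to $Q/Z_n$.
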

\begin{proof}
Since $Z(Q,\cdot) = Z(Q,\oplus)$, it follows by induction that $Z_{n}(Q,\cdot)=Z_{n}(Q,\oplus)$ for all $n>0$.  But $(Q,\oplus)$ is centrally nilpotent of class, say, $n$ (\cite{glauberman1}, Theorem 7, p. 390).  Therefore, $(Q,\cdot)$ is centrally nilpotent of class $n$.
\end{proof}

\section{Conclusion}
\label{sec:end}
It is natural to ask what conditions on a uniquely $2$-divisible group $G$ would force $(G,\circ)$ to be a commutative automorphic loop.  When $G$ is metabelian of odd order, $(G,\circ)$ is conjectured to be a commutative automorphic loop.  Our only examples of a group $G$ with $(G,\circ)$ not a commutative automorphic loop occur when $G$ is nonmetabelian.
\begin{problem}
Let $G$ be a uniquely $2$-divisible metabelian group.  Is $(G,\circ)$ a commutative automorphic loop?
\label{p4}
\end{problem}
\noindent
Moreover, using the above remarks, we can ask:
\begin{problem}
Let $(Q,\circ)$ be a commutative automorphic loop and let $(Q,\oplus)$ be the corresponding Bruck loop.  Is the left multiplication group of $(Q,\oplus)$ metabelian?
\end{problem}

\begin{problem}
Let $(Q,\cdot)$ be a Bruck loop of order $p^{3}$ where $p$ is an odd prime.  Is $(Q,\circ)$ is a commutative automorphic loop?
\end{problem}
\noindent
If this holds, then a classification of Bruck loops of order $p^{3}$ would follow from \cite{deBGV} and Theorem \ref{core}.

\begin{acknowledgment}
I would like to thank Michael Kinyon for suggesting this problem and for his careful reading of each draft of the manuscript.  Some investigations in this paper were assisted by the automated deduction tool \textsc{Prover9} \cite{PM}, the finite model builder \textsc{Mace4} \cite{PM} and GAP with the LOOPS package \cite{GAP, GAPNV}.
\end{acknowledgment}

\nocite{JKNV}

\bibliographystyle{amsplain}
\bibliography{bib}

\providecommand{\bysame}{\leavevmode\hbox to3em{\hrulefill}\thinspace}
\providecommand{\MR}{\relax\ifhmode\unskip\space\fi MR }
\providecommand{\MRhref}[2]{%
  \href{http://www.ams.org/mathscinet-getitem?mr=#1}{#2}
}
\providecommand{\href}[2]{#2}
\begin{thebibliography}{10}

\bibitem{aschbacher}
M.~Aschbacher, \emph{Near subgroups of finite groups}, J. Group Theory
  \textbf{1} (1998), 113--129.

\bibitem{bruck}
R.~H. Bruck, \emph{A {S}urvey of {B}inary {S}ystems}, Springer-Verlag, Berlin,
  1971.

\bibitem{burn}
R.P. Burn, \emph{Finite {B}ol loops}, Math. Proc. Cambridge Philos. Soc.
  \textbf{84} (1978), 377--385.

\bibitem{deBGV}
D.~A.~S. de~Barros, A.~Grishkov, and P.~Vojt\v{e}chovsk\'{y}, \emph{Commutative
  automorphic loops of order $p^{3}$}, Journal of Algebra and its Applications
  (to appear).

\bibitem{FKP}
T.~Foguel, M.~K. Kinyon, and J.~D. Phillips, \emph{On twisted subgroups and
  {B}ol loops of odd order}, Rocky Mountain J. Math \textbf{36} (2006),
  183--212.

\bibitem{glauberman1}
G.~Glauberman, \emph{On loops of odd order {I}}, Journal of Algebra \textbf{1}
  (1964), 374--396.

\bibitem{glauberman2}
\bysame, \emph{On loops of odd order {I}{I}}, Journal of Algebra \textbf{8}
  (1968), 393--414.

\bibitem{GAP}
The~GAP Group, \emph{Groups, {A}lgorithms, and {P}rogramming},
  http://www.gap-system.org (2008).

\bibitem{martin}
I.M. Isaacs, \emph{Finite {G}roup {T}heory}, Graduate studies in mathematics,
  AMS \textbf{92} (2008).

\bibitem{JKV2}
P.~Jedli\v{c}ka, M.K. Kinyon, and P.~Vojt\v{e}chovsk\'{y}, \emph{Constructions
  of commutative automorphic loops}, Comm. Algebra \textbf{38} (2010),
  3243--3267.

\bibitem{JKV1}
\bysame, \emph{The structure of commutative automorphic loops}, Trans. Amer.
  Math. Soc. \textbf{363} (2011), 365--384.

\bibitem{JKV3}
\bysame, \emph{Nilpotency in automorphic loops of prime power order}, Journal
  of Algebra \textbf{350} (2012), 64--76.

\bibitem{JKNV}
K.~W. Johnson, M.K. Kinyon, G.~P. Nagy, and P.~Vojt\v{e}chovsk\'{y},
  \emph{Searching for small simple automorphic loops}, LMS Journal of
  Computation and Mathematics \textbf{14} (2011), 200--213.

\bibitem{KV09}
M.~K. Kinyon and Petr Vojt\v{e}chovsk\'{y}, \emph{Primary decompositions in
  varieties of commutative diassociative loops}, Communications in Algebra
  \textbf{37} (2009), 1428--1444.

\bibitem{PM}
W.~McCune, \emph{Prover9 and {M}ace4},
  http://www.cs.unm.edu/\~{}mccune/prover9/ (2009).

\bibitem{GAPNV}
G.~P. Nagy and P.~Vojt\v{e}chovsk\'{y}, \emph{Loops: {C}omputing with
  quasigroups and loops}, http://www.math.du.edu/loops (2008).

\bibitem{hala}
H.~O. Pflugfelder, \emph{Quasigroups and {L}oops: {I}ntroduction}, Sigma Series
  in Pure Math, Berlin, 1990.

\bibitem{robinson1}
D.A. Robinson, \emph{{B}ol loops}, Trans. Amer. Math. Soc. \textbf{123} (1966),
  341--354.

\bibitem{robinson2}
\bysame, \emph{{B}ol quasigroups}, Publ. Math. Debrecen \textbf{19} (1972),
  151--153.

\end{thebibliography}
\end{document}